\newtheorem{theorem}{Theorem}[section]
\newtheorem{lemma}[theorem]{Lemma}
\newtheorem{corollary}[theorem]{Corollary}
\theoremstyle{definition}
\newtheorem{definition}[theorem]{Definition}
\theoremstyle{remark}
\numberwithin{equation}{section}
\begin{document}
\setcounter{page}{1}

\title[Construction of continuous K-g-Frames]{Construction of continuous K-g-Frames in Hilbert $C^{\ast}$-Modules}
	
\author[A. Karara, M. Rossafi,  M. Klilou, S. Kabbaj]{Abdelilah Karara$^{1}$, Mohamed Rossafi$^{*2}$,  Mohammed Klilou$^{2}$ and Samir Kabbaj$^{1}$}
	
\address{$^{1}$Department of Mathematics, Faculty of Sciences, University of Ibn Tofail, Kenitra, Morocco}
\email{\textcolor[rgb]{0.00,0.00,0.84}{abdelilah.karara@uit.ac.ma; samkabbaj@yahoo.fr}}
	
\address{$^{2}$Department of Mathematics Faculty of Sciences Dhar El Mahraz, University Sidi Mohamed Ben Abdellah, Fez, Morocco}
\email{\textcolor[rgb]{0.00,0.00,0.84}{rossafimohamed@gmail.com; mohammed.klilou@usmba.ac.ma}}

\subjclass[2020]{Primary: 42C15; Secondary: 47A05}

\keywords{Continous frames, Continous K-g-frames, $C^{\ast}$-algebras, Hilbert $C^{\ast}$-modules.}

\date{%10/03/2020; %Accepted: zzzzzz.
	\newline \indent $^{*}$Corresponding author}

\begin{abstract}
In this work, we provide some constructions and the sum of new continuous K-g-frames in
Hilbert$C^{\ast}$-Modules. We provide certain necessary and sufficient conditions for some adjointable
operators on $\mathcal{H}$, under which new continuous K-g-frames can be retrieved from those that already exist. Additionally, we discuss the sum of continuous K-g-frames, discover some of their
characterizations, and offer some adjointable operators to construct new continuous K-g-frames
from the previous ones.

\end{abstract} \maketitle

\section{Introduction and preliminaries}
Frame theory is an active topic of mathematical research in fields such as signal processing,
computer science, and more. Frames for Hilbert spaces were first introduced in \textbf{1952} by Duffin
and Schaefer \cite{Duf} for the study of nonharmonic Fourier series. Daubechies, Grossmann, and
Meyer \cite{Dau} later revised and developed them in \textbf{1986}, and they have been popularized since then.

Recently, many mathematicians have generalized frame theory from Hilbert spaces to Hilbert $C^*$-
modules. For detailed information on frames in Hilbert $C^*$-modules, we refer to \cite{r4, Kho2, r1,Jin}.
Currently, the study of continuous K-g-frames has yielded many results, which were introduced by
Alizadeh, Rahimi, Osgooei, and Rahmani \cite{Aliz}. The study of some of their properties has been
further explored in \cite{Ches}.

Throughout this paper $(\Omega,\nu)$ be a measure space, let $\mathcal{H}$ and $\mathcal{K}$ be two Hilbert $C^{\ast}$-modules, $\{\mathcal{K}_{\xi}: \xi\in\Omega\}$  is a sequence of subspaces of $\mathcal{K}$, we also reserve the notation $\operatorname{End}_\mathcal{A}^*\left(\mathcal{H}, \mathcal{K}_\xi\right)$ for the collection of all adjointable $\mathcal{A}$-linear maps from $\mathcal{H}$ to $\mathcal{K}_{\xi}$ and $\operatorname{End}_\mathcal{A}^*(\mathcal{H}, \mathcal{H})$ is denoted by $\operatorname{End}_\mathcal{A}^*(\mathcal{H})$. We will use $\mathcal{N}(\Theta)$ and $\mathcal{R}(\Theta)$     for the null  and range space of an operator
  $\Theta\in \operatorname{End}_\mathcal{A}^*\left(\mathcal{H}, \mathcal{K}\right)$, respectively.
We also denote 
$$\bigoplus_{\xi \in \Omega} \mathcal{K}_{\xi}=\left\{\alpha=\left\{\alpha_\xi\right\}: \alpha_\xi \in \mathcal{K}_{\xi}\right. \operatorname{and}\; \left\|\int_{\Omega}|\alpha_{\xi}|^{2}\mathrm{d} \nu(\xi)\right\|<\infty\left.\right\}.$$
Let $f=\left\{f_\xi\right\}_{\xi \in \Omega}$ and $g=\left\{g_\xi\right\}_{\xi \in \Omega}$,  the inner product is defined by $\langle f, g\rangle=\int_{\Omega}\left\langle f_\xi, g_\xi\right\rangle \mathrm{d} \nu(\xi)$, we have $\bigoplus_{\xi \in \Omega} \mathcal{K}_{\xi}$ is a Hilbert $\mathcal{A}$-module.
\begin{definition}\cite{Con}.
	Let $\mathcal{A}$ be a Banach algebra, an involution is a map $ x\rightarrow x^{\ast} $ of $\mathcal{A}$ into itself such that for all $x$ and $y$ in $\mathcal{A}$ and all scalars $\alpha$ the following conditions hold:
	\begin{enumerate}
		\item  $(x^{\ast})^{\ast}=x$.
		\item  $(xy)^{\ast}=y^{\ast}x^{\ast}$.
		\item  $(\alpha x+y)^{\ast}=\bar{\alpha}x^{\ast}+y^{\ast}$.
	\end{enumerate}
\end{definition}
\begin{definition}\cite{Con}.
	A $C^{\ast}$-algebra $\mathcal{A}$ is a Banach algebra with involution such that :$$\|x^{\ast}x\|=\|x\|^{2}$$ for every $x$ in $\mathcal{A}$.
\end{definition}
\begin{definition}\cite{Kap}.
	Let $ \mathcal{A} $ be a unital $C^{\ast}$-algebra and $\mathcal{H}$ be a left $ \mathcal{A} $-module, such that the linear structures of $\mathcal{A}$ and $\mathcal{H}$ are compatible. $\mathcal{H}$ is a pre-Hilbert $\mathcal{A}$-module if $\mathcal{H}$ is equipped with an $\mathcal{A}$-valued inner product $\langle.,.\rangle :\mathcal{H}\times \mathcal{H}\rightarrow\mathcal{A}$, such that is sesquilinear, positive definite and respects the module action. In the other words,  
	\begin{itemize}
		\item [(i)] $ \langle x,x\rangle\geq0 $ for all $ x\in \mathcal{H}$ and $ \langle x,x\rangle=0$ if and only if $x=0$.
		\item [(ii)] $\langle ax+y,z\rangle=a\langle x,z\rangle+\langle y,z\rangle$ for all $a\in\mathcal{A}$ and $x,y,z\in \mathcal{H}$.
		\item[(iii)] $ \langle x,y\rangle=\langle y,x\rangle^{\ast} $ for all $x,y\in \mathcal{H}$.
	\end{itemize}	 
For $x\in \mathcal{H}$, we define $||x||=||\langle x,x\rangle||^{\frac{1}{2}}$. where $||.||$  is a norm on $\mathcal{H}$ and if $\mathcal{H}$ is complete this norm we will call it a Hilbert $\mathcal{A}$-module or a Hilbert $C^{\ast}$-module over $\mathcal{A}$. For every $a$ in $C^{\ast}$-algebra $\mathcal{A}$, we have $|a|=(a^{\ast}a)^{\frac{1}{2}}$ 
\end{definition}
\begin{lemma} \cite{Pas}.
	Let $\mathcal{H}$ be Hilbert $\mathcal{A}$-module. If $\mathcal{T}\in End_{\mathcal{A}}^{\ast}(\mathcal{H})$, then $$\langle \mathcal{T}x,\mathcal{T}x\rangle\leq\|\mathcal{T}\|^{2}\langle x,x\rangle, \forall x\in\mathcal{H}.$$
\end{lemma}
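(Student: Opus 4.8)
The plan is to reduce the stated module inequality to a positivity statement inside the $C^{\ast}$-algebra $\operatorname{End}_{\mathcal{A}}^{\ast}(\mathcal{H})$ and then transport that positivity back to the $\mathcal{A}$-valued inner product. First I would use the adjointability of $\mathcal{T}$ to rewrite the left-hand side as
$$\langle \mathcal{T}x, \mathcal{T}x\rangle = \langle \mathcal{T}^{\ast}\mathcal{T}x, x\rangle, \qquad x \in \mathcal{H},$$
so that the claim becomes $\langle (\|\mathcal{T}\|^{2}I - \mathcal{T}^{\ast}\mathcal{T})x, x\rangle \ge 0$ for every $x$, where $I$ denotes the identity operator on $\mathcal{H}$.

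Next I would exploit that $\operatorname{End}_{\mathcal{A}}^{\ast}(\mathcal{H})$ is itself a unital $C^{\ast}$-algebra. In it, $\mathcal{T}^{\ast}\mathcal{T}$ is a positive element, and the $C^{\ast}$-identity gives $\|\mathcal{T}^{\ast}\mathcal{T}\| = \|\mathcal{T}\|^{2}$. The central functional-calculus fact I would invoke is that for any positive element $a$ of a $C^{\ast}$-algebra one has $\|a\|\,1 - a \ge 0$, since its spectrum lies in $[0,\|a\|]$. Applying this to $a = \mathcal{T}^{\ast}\mathcal{T}$ yields the operator inequality
$$\|\mathcal{T}\|^{2}I - \mathcal{T}^{\ast}\mathcal{T} \ge 0$$
in $\operatorname{End}_{\mathcal{A}}^{\ast}(\mathcal{H})$.

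To convert this into a statement about the module inner product, I would take the positive square root $S = (\|\mathcal{T}\|^{2}I - \mathcal{T}^{\ast}\mathcal{T})^{1/2} \in \operatorname{End}_{\mathcal{A}}^{\ast}(\mathcal{H})$, which is self-adjoint and satisfies $S^{\ast}S = \|\mathcal{T}\|^{2}I - \mathcal{T}^{\ast}\mathcal{T}$. Then for each $x \in \mathcal{H}$,
$$\|\mathcal{T}\|^{2}\langle x, x\rangle - \langle \mathcal{T}x, \mathcal{T}x\rangle = \langle (\|\mathcal{T}\|^{2}I - \mathcal{T}^{\ast}\mathcal{T})x, x\rangle = \langle S^{\ast}Sx, x\rangle = \langle Sx, Sx\rangle \ge 0,$$
the final inequality being the positive-definiteness of the $\mathcal{A}$-valued inner product from axiom (i). Rearranging gives precisely $\langle \mathcal{T}x, \mathcal{T}x\rangle \le \|\mathcal{T}\|^{2}\langle x, x\rangle$.

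The step I expect to be the main obstacle is the bridge between the two notions of positivity: the inequality $\|\mathcal{T}\|^{2}I - \mathcal{T}^{\ast}\mathcal{T} \ge 0$ lives in the abstract $C^{\ast}$-algebra $\operatorname{End}_{\mathcal{A}}^{\ast}(\mathcal{H})$, whereas the conclusion concerns the $\mathcal{A}$-valued form $\langle \,\cdot\, x, x\rangle$. Introducing the square root $S$ is exactly what makes this transition clean and lets me avoid proving the delicate equivalence ``operator positive $\iff \langle \,\cdot\, x,x\rangle \ge 0$'' from scratch. I would therefore be careful to justify the existence of $S$ purely by continuous functional calculus applied to the positive element $\|\mathcal{T}\|^{2}I - \mathcal{T}^{\ast}\mathcal{T}$, so that no circular reasoning enters the argument.
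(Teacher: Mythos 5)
Your proof is correct, and in fact there is nothing in the paper to compare it against: the lemma is stated with a citation to Paschke \cite{Pas} and no proof is given. Your argument is the standard one from the literature (it is essentially the proof in Paschke's paper and in Lance's book): rewrite $\langle \mathcal{T}x,\mathcal{T}x\rangle=\langle \mathcal{T}^{*}\mathcal{T}x,x\rangle$, note $\mathcal{T}^{*}\mathcal{T}\geq 0$ with $\|\mathcal{T}^{*}\mathcal{T}\|=\|\mathcal{T}\|^{2}$ in the unital $C^{*}$-algebra $\operatorname{End}_{\mathcal{A}}^{*}(\mathcal{H})$, deduce $\|\mathcal{T}\|^{2}I-\mathcal{T}^{*}\mathcal{T}\geq 0$ from the spectral containment, and transport positivity to the $\mathcal{A}$-valued form via the square root $S$, using $\langle S^{2}x,x\rangle=\langle Sx,Sx\rangle\geq 0$. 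You correctly identified and defused the one genuine pitfall, namely that abstract positivity of an element $a\in\operatorname{End}_{\mathcal{A}}^{*}(\mathcal{H})$ does not immediately yield $\langle ax,x\rangle\geq 0$ in $\mathcal{A}$; the square root is precisely the device that bridges this without assuming the full equivalence. For comparison, Paschke's original route goes through states of $\mathcal{A}$: for each state $\phi$, the form $\phi(\langle\cdot,\cdot\rangle)$ is a scalar semi-inner product on $\mathcal{H}$, the ordinary Hilbert-space bound gives $\phi\bigl(\|\mathcal{T}\|^{2}\langle x,x\rangle-\langle \mathcal{T}x,\mathcal{T}x\rangle\bigr)\geq 0$, and since states detect positivity in a $C^{*}$-algebra the element is positive. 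The state argument avoids functional calculus inside $\operatorname{End}_{\mathcal{A}}^{*}(\mathcal{H})$, whereas yours is more self-contained at the operator-algebra level; your implicit reliance on the standard fact that the adjointable operators form a $C^{*}$-algebra under the operator norm is harmless, though in a fully written version you should cite it explicitly.
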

\begin{lemma} \cite{Ara}.
Let $\mathcal{H}$ and $\mathcal{K}$ two Hilbert $\mathcal{A}$-modules and $\mathcal{T}\in End^{\ast}_{\mathcal{A}}(\mathcal{H},\mathcal{K})$. Then the following statements are equivalent:
\begin{itemize}
	\item [(i)] $\mathcal{T}$ is surjective.
	\item [(ii)] $\mathcal{T}^{\ast}$ is bounded below with respect to norm, i.e., there is $m>0$ such that $\|\mathcal{T}^{\ast}x\|\geq m\|x\|$ for all $x\in\mathcal{K}$.
	\item [(iii)] $\mathcal{T}^{\ast}$ is bounded below with respect to the inner product, i.e., there is $m'>0$ such that $\langle \mathcal{T}^{\ast}x,\mathcal{T}^{\ast}x\rangle\geq m'\langle x,x\rangle$ for all $x\in\mathcal{K}$.
\end{itemize}
\end{lemma}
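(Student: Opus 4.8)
The plan is to prove the three implications (iii)~$\Rightarrow$~(ii)~$\Rightarrow$~(i)~$\Rightarrow$~(iii), so that the three statements become equivalent. Two of these are short; the genuine content lies in (ii)~$\Rightarrow$~(i), and the subtlety running through the whole argument is that a bound on \emph{norms} does not automatically upgrade to a bound on $\mathcal{A}$-valued \emph{inner products}. Consequently the order statement (iii) cannot simply be read off from (ii): it has to be recovered by a separate argument, which is why I close the cycle through (i) rather than proving (ii)~$\Leftrightarrow$~(iii) directly.

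First I would dispatch (iii)~$\Rightarrow$~(ii). Since the $C^{\ast}$-norm is monotone on positive elements of $\mathcal{A}$, applying $\|\cdot\|$ to $\langle \mathcal{T}^{\ast}x,\mathcal{T}^{\ast}x\rangle \geq m'\langle x,x\rangle$ gives $\|\mathcal{T}^{\ast}x\|^{2}=\|\langle \mathcal{T}^{\ast}x,\mathcal{T}^{\ast}x\rangle\|\geq m'\|\langle x,x\rangle\|=m'\|x\|^{2}$, so (ii) holds with $m=\sqrt{m'}$.

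Next comes (ii)~$\Rightarrow$~(i), the main step. The hypothesis $\|\mathcal{T}^{\ast}x\|\geq m\|x\|$ immediately makes $\mathcal{T}^{\ast}$ injective and forces $\mathcal{R}(\mathcal{T}^{\ast})$ to be closed. I would then invoke the closed-range theorem for adjointable operators on Hilbert $C^{\ast}$-modules: $\mathcal{R}(\mathcal{T}^{\ast})$ is closed if and only if $\mathcal{R}(\mathcal{T})$ is closed, and a closed range is orthogonally complemented, with $\mathcal{R}(\mathcal{T})=\mathcal{N}(\mathcal{T}^{\ast})^{\perp}$. Since (ii) yields $\mathcal{N}(\mathcal{T}^{\ast})=\{0\}$, this gives $\mathcal{R}(\mathcal{T})=\mathcal{K}$, i.e. $\mathcal{T}$ is surjective. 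I expect the real obstacle here: unlike the Hilbert-space case, not every closed submodule of $\mathcal{K}$ is orthogonally complemented, so one must genuinely use that $\mathcal{R}(\mathcal{T})$ is the range of an adjointable operator with closed range, and the equivalence ``$\mathcal{R}(\mathcal{T}^{\ast})$ closed $\Leftrightarrow$ $\mathcal{R}(\mathcal{T})$ closed'' is the nontrivial input (together with the accompanying complementation).

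Finally I would close the loop with (i)~$\Rightarrow$~(iii), which also supplies the order bound that (ii) alone does not hand over. Put $S=\mathcal{T}\mathcal{T}^{\ast}\in\operatorname{End}_{\mathcal{A}}^{\ast}(\mathcal{K})$, a positive operator satisfying $\langle \mathcal{T}^{\ast}x,\mathcal{T}^{\ast}x\rangle=\langle Sx,x\rangle$. From surjectivity of $\mathcal{T}$ I would first show $S$ is invertible: by the open mapping theorem $\mathcal{T}$ is open, so for each $x\in\mathcal{K}$ there is $w\in\mathcal{H}$ with $\mathcal{T}w=x$ and $\|w\|\leq c^{-1}\|x\|$; then $\langle x,x\rangle=\langle \mathcal{T}w,x\rangle=\langle w,\mathcal{T}^{\ast}x\rangle$ gives $\|x\|^{2}\leq c^{-1}\|x\|\,\|\mathcal{T}^{\ast}x\|$, hence $\|\mathcal{T}^{\ast}x\|\geq c\|x\|$ and therefore $\|Sx\|\geq c^{2}\|x\|$. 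Thus $S$ is bounded below, and being self-adjoint with closed range it satisfies $\mathcal{R}(S)=\mathcal{N}(S)^{\perp}=\mathcal{K}$, so it is bijective and hence invertible in $\operatorname{End}_{\mathcal{A}}^{\ast}(\mathcal{K})$. Functional calculus for the positive invertible element then yields $S\geq \|S^{-1}\|^{-1}I_{\mathcal{K}}$, that is $\langle Sx,x\rangle\geq \|S^{-1}\|^{-1}\langle x,x\rangle$, which is precisely (iii) with $m'=\|S^{-1}\|^{-1}$.
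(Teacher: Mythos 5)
The paper does not actually prove this statement: it appears as Lemma 1.5 quoted from \cite{Ara} with no internal proof, so there is nothing in the paper to compare against; your proposal can only be judged on its own. On its own it is correct, and it is essentially the standard argument (it matches Arambaši\'{c}'s original proof and uses exactly the closed-range equivalence the paper itself records as Lemma 1.7(1)): (iii)$\Rightarrow$(ii) by norm-monotonicity on positives, (ii)$\Rightarrow$(i) via closedness of $\mathcal{R}(\mathcal{T}^{\ast})$, the closed-range theorem and $\mathcal{K}=\mathcal{R}(\mathcal{T})\oplus\mathcal{N}(\mathcal{T}^{\ast})$, and (i)$\Rightarrow$(iii) by showing $S=\mathcal{T}\mathcal{T}^{\ast}$ is positive and invertible and then using $S\geq\|S^{-1}\|^{-1}I_{\mathcal{K}}$; your explicit worry about non-complemented closed submodules is well placed and correctly resolved by restricting to ranges of adjointable operators.

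Two steps deserve to be written out rather than asserted. First, in (i)$\Rightarrow$(iii) the inference ``$\|\mathcal{T}^{\ast}x\|\geq c\|x\|$ and therefore $\|Sx\|\geq c^{2}\|x\|$'' does not come from composing bounded-below maps, since $\mathcal{T}$ itself need not be bounded below; it requires Cauchy--Schwarz: $c^{2}\|x\|^{2}\leq\|\mathcal{T}^{\ast}x\|^{2}=\left\|\left\langle \mathcal{T}\mathcal{T}^{\ast}x,x\right\rangle\right\|\leq\|Sx\|\,\|x\|$. Second, the last line converts the operator inequality $S\geq\|S^{-1}\|^{-1}I_{\mathcal{K}}$ in the $C^{\ast}$-algebra $\operatorname{End}_{\mathcal{A}}^{\ast}(\mathcal{K})$ into the $\mathcal{A}$-valued inequality $\langle Sx,x\rangle\geq\|S^{-1}\|^{-1}\langle x,x\rangle$; this uses the standard fact (Paschke) that an adjointable operator is positive in $\operatorname{End}_{\mathcal{A}}^{\ast}(\mathcal{K})$ if and only if $\langle Tx,x\rangle\geq 0$ in $\mathcal{A}$ for every $x$, and since this is precisely the point where the norm bound of (ii) gets upgraded to the order bound of (iii) --- the subtlety your whole cyclic structure is designed around --- it should be stated with a reference. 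With those two lines added, the cycle (iii)$\Rightarrow$(ii)$\Rightarrow$(i)$\Rightarrow$(iii) is complete and correct.
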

\begin{definition} \cite{Kar}
Let $\mathcal{T} \in \operatorname{End}_\mathcal{A}^*(\mathcal{H},\mathcal{K})$. The \textbf{Moore-Penrose inverse} of $\mathcal{T}$ (if it exists) is an element $\mathcal{T}^{\dagger}\in\operatorname{End}_\mathcal{A}^*(\mathcal{H},\mathcal{K})$ satisfying
\begin{itemize}
\item $\mathcal{T} \mathcal{T}^{\dagger} \mathcal{T}=\mathcal{T},$
\item $\mathcal{T}^{\dagger} \mathcal{T} \mathcal{T}^{\dagger}=\mathcal{T}^{\dagger},$
\item $\left(\mathcal{T} \mathcal{T}^{\dagger}\right)^*=\mathcal{T} \mathcal{T}^{\dagger},$
\item $\left(\mathcal{T}^{\dagger} \mathcal{T}\right)^*=\mathcal{T}^{\dagger} \mathcal{T} .$
\end{itemize}
\end{definition}
\begin{lemma} \cite{Fang, Kar}
Let $\Theta \in \operatorname{End}_\mathcal{A}^*(\mathcal{H},\mathcal{K})$. Then the following holds:
\item[1.] $\mathcal{R}(\Theta)$ is closed in $\mathcal{K}$ if and only if $\mathcal{R}\left(\Theta^*\right)$ is closed in $\mathcal{H}$.
\item[2.]  $\left(\Theta^*\right)^{\dagger}=\left(\Theta^{\dagger}\right)^*$.
\item[3.]  The orthogonal projection of $\mathcal{K}$ onto $\mathcal{R}(\Theta)$ is given by $\Theta \Theta^{\dagger}$.
\item[4.]  The orthogonal projection of $\mathcal{H}$ onto $\mathcal{R}\left(\Theta^{\dagger}\right)$ is given by $\Theta^{\dagger} \Theta$.
%\item[5.]  $\mathcal{N}\left(\Theta^{\dagger}\right)=\mathcal{R}^{\perp}(\Theta)$ and $\mathcal{R}\left(\Theta^{\dagger}\right)=\mathcal{N}^{\perp}(\Theta)$.
\end{lemma}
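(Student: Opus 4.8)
The plan is to treat the four assertions separately: the three algebraic identities in parts 2--4 follow from the defining Moore--Penrose relations together with the uniqueness of $\Theta^{\dagger}$, while the analytic statement in part 1 is handled last and is where the real content lies.

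For part 2, I would verify that $\left(\Theta^{\dagger}\right)^{*}$ satisfies the four defining equations of the Moore--Penrose inverse of $\Theta^{*}$. Writing $\Psi=\Theta^{\dagger}$, taking adjoints in $\Theta\Psi\Theta=\Theta$ and $\Psi\Theta\Psi=\Psi$ gives $\Theta^{*}\Psi^{*}\Theta^{*}=\Theta^{*}$ and $\Psi^{*}\Theta^{*}\Psi^{*}=\Psi^{*}$. The two self-adjointness requirements reduce to the identities $\Theta^{*}\Psi^{*}=(\Psi\Theta)^{*}$ and $\Psi^{*}\Theta^{*}=(\Theta\Psi)^{*}$, which are self-adjoint precisely because $\Psi\Theta$ and $\Theta\Psi$ already are. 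By uniqueness of the Moore--Penrose inverse this forces $\left(\Theta^{*}\right)^{\dagger}=\Psi^{*}=\left(\Theta^{\dagger}\right)^{*}$.

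For parts 3 and 4, set $P=\Theta\Theta^{\dagger}$ and $Q=\Theta^{\dagger}\Theta$. The relation $(\Theta\Theta^{\dagger})^{*}=\Theta\Theta^{\dagger}$ yields $P^{*}=P$, and $\Theta\Theta^{\dagger}\Theta=\Theta$ yields $P^{2}=(\Theta\Theta^{\dagger}\Theta)\Theta^{\dagger}=\Theta\Theta^{\dagger}=P$, so $P$ is a self-adjoint idempotent, hence an orthogonal projection whose range is closed and orthogonally complemented. Then $\mathcal{R}(P)\subseteq\mathcal{R}(\Theta)$ is immediate, while $\Theta x=\Theta\Theta^{\dagger}\Theta x=P(\Theta x)$ gives $\mathcal{R}(\Theta)\subseteq\mathcal{R}(P)$, so $P$ projects onto $\mathcal{R}(\Theta)$. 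The argument for $Q$ is identical, using $(\Theta^{\dagger}\Theta)^{*}=\Theta^{\dagger}\Theta$, the relation $\Theta^{\dagger}\Theta\Theta^{\dagger}=\Theta^{\dagger}$, and $\Theta^{\dagger}z=\Theta^{\dagger}\Theta\Theta^{\dagger}z=Q(\Theta^{\dagger}z)$ to conclude $\mathcal{R}(Q)=\mathcal{R}(\Theta^{\dagger})$.

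Part 1 is the main obstacle, since closedness of a range is an analytic property and, over a general $C^{\ast}$-algebra, one cannot freely pass to orthogonal complements as in the Hilbert space closed-range theorem. The route I would take relies on the characterization, available in the cited references, that an adjointable operator between Hilbert $C^{\ast}$-modules admits a Moore--Penrose inverse if and only if its range is closed. Granting this, closedness of $\mathcal{R}(\Theta)$ produces $\Theta^{\dagger}$; part 2 then furnishes $\left(\Theta^{*}\right)^{\dagger}=\left(\Theta^{\dagger}\right)^{*}$, and applying the same characterization to $\Theta^{*}$ shows $\mathcal{R}(\Theta^{*})$ is closed. The converse follows by symmetry, replacing $\Theta$ by $\Theta^{*}$ and using $\left(\Theta^{*}\right)^{*}=\Theta$. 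The delicate point is exactly this equivalence between closed range and Moore--Penrose invertibility, which rests on decomposing $\mathcal{H}$ and $\mathcal{K}$ into orthogonal direct sums of kernel and range --- automatic over Hilbert spaces, but valid here only under the closed-range hypothesis.
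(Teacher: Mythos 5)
Your proposal is correct, but there is nothing in the paper to compare it against: the paper states this lemma purely as a quoted preliminary, with citations to Fang--Yu--Yao and Karizaki--Hassani--Amyari, and gives no proof at all. Judged on its own merits, your argument is sound. The verification in part 2 that $\left(\Theta^{\dagger}\right)^{*}$ satisfies the four Moore--Penrose equations for $\Theta^{*}$ is complete (the appeal to uniqueness of the Moore--Penrose inverse is legitimate; uniqueness is a purely algebraic fact valid for adjointable operators and deserves at most one line), and the treatment of parts 3 and 4 via self-adjoint idempotents is the standard and correct one --- note that in the Hilbert $C^{*}$-module setting the step ``self-adjoint idempotent adjointable operator $\Rightarrow$ orthogonal projection onto its (closed, orthogonally complemented) range'' does hold, since $\langle Px,(I-P)y\rangle=\langle x,P(I-P)y\rangle=0$ and $\mathcal{R}(P)=\mathcal{N}(I-P)$. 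For part 1 you correctly isolate the one genuinely hard ingredient, the equivalence (due to Xu and Sheng, which is item \cite{XO} in this paper's bibliography) between closedness of the range and existence of the Moore--Penrose inverse, and you outsource only that; it is worth observing that your own part 3 already gives you the easy direction of this equivalence for free, since if $\Theta^{\dagger}$ exists then $\mathcal{R}(\Theta)=\mathcal{R}\bigl(\Theta\Theta^{\dagger}\bigr)=\mathcal{N}\bigl(I-\Theta\Theta^{\dagger}\bigr)$ is closed, so only the implication ``closed range $\Rightarrow$ $\Theta^{\dagger}$ exists'' truly rests on the cited references. One small caveat you implicitly handle but should state: parts 2--4 presuppose that $\Theta^{\dagger}$ exists, i.e.\ that $\mathcal{R}(\Theta)$ is closed, which is how the lemma is used throughout the paper (e.g.\ in Theorems 2.1 and 2.4, where closed range is a hypothesis).
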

\begin{lemma} \cite{Fang}
 Let $\mathcal{H}_1$ and $\mathcal{H}_2$ two Hilbert $\mathcal{A}$-Modules  and $\mathcal{T}\in \operatorname{End}_\mathcal{A}^*(\mathcal{H}_1,\mathcal{H}) ,\mathcal{T}^{\prime}\in \operatorname{End}_\mathcal{A}^*(\mathcal{H}_2,\mathcal{H}) $ with $\overline{\mathcal{R}\left(\mathcal{T}^{*}\right)}$ is orthogonally
complemented. Then the following assertions are equivalent:
 \begin{enumerate}
 \item $\mathcal{T}^{\prime}\left(\mathcal{T}^{\prime}\right)^* \leq \lambda \mathcal{T} \mathcal{T}^*$ for some $\lambda>0$.
 \item There exist $\mu>0$ such that $\left\|\left(\mathcal{T}^{\prime}\right)^* x\right\| \leq \mu\left\|\mathcal{T}^* x\right\|$ for all $x \in F$.
 \item  There exists $Q \in \operatorname{End}_{\mathcal{A}}^*(\mathcal{H}_2,\mathcal{H}_1 )$ such that $\mathcal{T}^{\prime}=\mathcal{T} Q$, that is the equation $\mathcal{T} X=\mathcal{T}^{\prime}$ has a solution.
 \item $\mathcal{R}\left(\mathcal{T}^{\prime}\right) \subseteq \mathcal{R}(\mathcal{T})$.
 \end{enumerate}
\end{lemma}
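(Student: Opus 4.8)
The plan is to establish the four conditions as a cyclic chain of implications $(1)\Rightarrow(2)\Rightarrow(3)\Rightarrow(4)\Rightarrow(1)$. This statement is the Hilbert $C^{\ast}$-module incarnation of Douglas' factorization lemma, and the orthogonal complementation hypothesis on $\overline{\mathcal{R}(\mathcal{T}^{*})}$ is precisely what substitutes for the orthogonal projection freely available in the Hilbert-space proof. The two easy links come first. For $(1)\Rightarrow(2)$ I would evaluate the operator inequality on a vector: for every $x\in\mathcal{H}$,
\[
\langle(\mathcal{T}')^{*}x,(\mathcal{T}')^{*}x\rangle=\langle\mathcal{T}'(\mathcal{T}')^{*}x,x\rangle\le\lambda\langle\mathcal{T}\mathcal{T}^{*}x,x\rangle=\lambda\langle\mathcal{T}^{*}x,\mathcal{T}^{*}x\rangle,
\]
and then pass to the $C^{\ast}$-norm, using its monotonicity on positive elements of $\mathcal{A}$, to obtain $\|(\mathcal{T}')^{*}x\|\le\mu\|\mathcal{T}^{*}x\|$ with $\mu=\sqrt{\lambda}$. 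The implication $(3)\Rightarrow(4)$ is immediate, since $\mathcal{R}(\mathcal{T}')=\mathcal{R}(\mathcal{T}Q)\subseteq\mathcal{R}(\mathcal{T})$.

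The heart of the argument is $(2)\Rightarrow(3)$. I would define a map $\varphi$ on the submodule $\mathcal{R}(\mathcal{T}^{*})$ by $\varphi(\mathcal{T}^{*}x)=(\mathcal{T}')^{*}x$; the inequality in $(2)$ guarantees both that $\varphi$ is well defined (if $\mathcal{T}^{*}x=\mathcal{T}^{*}y$ then $\|(\mathcal{T}')^{*}(x-y)\|\le\mu\|\mathcal{T}^{*}(x-y)\|=0$) and that it is bounded with bound $\mu$, so it extends by continuity to $\overline{\mathcal{R}(\mathcal{T}^{*})}$. Here the hypothesis enters: since $\overline{\mathcal{R}(\mathcal{T}^{*})}$ is orthogonally complemented, $\mathcal{H}_{1}=\overline{\mathcal{R}(\mathcal{T}^{*})}\oplus\overline{\mathcal{R}(\mathcal{T}^{*})}^{\perp}$, and I extend $\varphi$ to all of $\mathcal{H}_{1}$ by declaring it zero on the complement. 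Once $\varphi\in\operatorname{End}_{\mathcal{A}}^{\ast}(\mathcal{H}_{1},\mathcal{H}_{2})$ satisfies $\varphi\mathcal{T}^{*}=(\mathcal{T}')^{*}$, I set $Q=\varphi^{*}$ and take adjoints to get $\mathcal{T}Q=(\varphi\mathcal{T}^{*})^{*}=\mathcal{T}'$.

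To close the cycle I would prove $(4)\Rightarrow(1)$ by manufacturing a factorization directly from the range inclusion. Given $y\in\mathcal{H}_{2}$, the vector $\mathcal{T}'y$ lies in $\mathcal{R}(\mathcal{T})$, so $\mathcal{T}'y=\mathcal{T}w$ for some $w$; decomposing $w$ along $\mathcal{H}_{1}=\overline{\mathcal{R}(\mathcal{T}^{*})}\oplus\mathcal{N}(\mathcal{T})$ (using $\mathcal{N}(\mathcal{T})=\mathcal{R}(\mathcal{T}^{*})^{\perp}$ and the complementation hypothesis) and keeping the component $w_{0}\in\overline{\mathcal{R}(\mathcal{T}^{*})}$, I define $Qy=w_{0}$. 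Well-definedness follows from $\overline{\mathcal{R}(\mathcal{T}^{*})}\cap\mathcal{N}(\mathcal{T})=\{0\}$, $\mathcal{A}$-linearity is routine, and boundedness can be extracted from a closed-graph argument. With $\mathcal{T}Q=\mathcal{T}'$ in hand, the estimate $QQ^{*}\le\|Q\|^{2}I_{\mathcal{H}_{1}}$—which is exactly the Paschke inequality $\langle\mathcal{T}x,\mathcal{T}x\rangle\le\|\mathcal{T}\|^{2}\langle x,x\rangle$ of the lemma cited from \cite{Pas}—yields $\mathcal{T}'(\mathcal{T}')^{*}=\mathcal{T}QQ^{*}\mathcal{T}^{*}\le\|Q\|^{2}\mathcal{T}\mathcal{T}^{*}$, which is $(1)$ with $\lambda=\|Q\|^{2}$.

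The step I expect to be the main obstacle, appearing in both $(2)\Rightarrow(3)$ and $(4)\Rightarrow(1)$, is verifying that the bounded $\mathcal{A}$-linear map $\varphi$ (respectively $Q$) is genuinely \emph{adjointable}: in Hilbert $C^{\ast}$-modules, unlike in Hilbert spaces, boundedness alone does not force the existence of an adjoint. The orthogonal decomposition supplied by the complementation hypothesis is exactly what allows one to write a candidate adjoint block by block and confirm it implements the defining relation, and it is the sole reason that hypothesis is imposed. Everything else reduces to the routine order- and norm-manipulations sketched above.
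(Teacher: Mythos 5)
The first thing to note is that the paper contains no proof of this statement: it is Lemma 1.8, imported verbatim from Fang--Yu--Yao \cite{Fang}, so your attempt can only be measured against that source and against correctness. Your skeleton is the right one (the Douglas factorization cycle), and the easy links are fine: $(1)\Rightarrow(2)$ by evaluating the operator inequality at $x$ and using monotonicity of the norm on positive elements of $\mathcal{A}$, and $(3)\Rightarrow(4)$ trivially. Your closed-graph construction of a bounded $\mathcal{A}$-linear $Q$ with $\mathcal{T}Q=\mathcal{T}'$ from the range inclusion, using $\mathcal{H}_1=\overline{\mathcal{R}(\mathcal{T}^*)}\oplus\mathcal{N}(\mathcal{T})$ (which the complementation hypothesis legitimately provides, since $\overline{\mathcal{R}(\mathcal{T}^*)}^{\perp}=\mathcal{N}(\mathcal{T})$), is also sound as far as it goes. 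The genuine gap is exactly at the point you flag and then wave away: \emph{adjointability}. In $(2)\Rightarrow(3)$ you define $\varphi$ on $\overline{\mathcal{R}(\mathcal{T}^*)}$ by $\varphi(\mathcal{T}^*x)=(\mathcal{T}')^*x$, extend by zero on the complement, and then ``set $Q=\varphi^*$'' --- but the existence of $\varphi^*$ \emph{is} the theorem. Write out what a candidate adjoint $\psi$ must satisfy: for $u=\mathcal{T}^*z$ one needs $\langle\varphi(\mathcal{T}^*z),v\rangle=\langle z,\mathcal{T}'v\rangle$ to equal $\langle\mathcal{T}^*z,\psi(v)\rangle=\langle z,\mathcal{T}\psi(v)\rangle$, i.e.\ $\mathcal{T}\psi=\mathcal{T}'$ with $\mathcal{R}(\psi)\subseteq\mathcal{N}(\mathcal{T})^{\perp}=\overline{\mathcal{R}(\mathcal{T}^*)}$. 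So the adjoint of $\varphi$ is precisely the solution $Q$ you are trying to manufacture: your ``write a candidate adjoint block by block'' suggestion is circular. The decomposition tells you where the adjoint must live and what identity it must satisfy; it does not produce it. This construction of an \emph{adjointable} solution is the entire content of \cite{Fang} beyond the classical Hilbert-space case, and your sketch does not contain it.

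The same defect infects your $(4)\Rightarrow(1)$ step: the $Q$ obtained from the closed graph theorem is only a bounded module map, so $Q^*$, the inequality $QQ^*\leq\|Q\|^2 I$, and the display $\mathcal{T}'(\mathcal{T}')^*=\mathcal{T}QQ^*\mathcal{T}^*$ are all unavailable. This particular step is repairable, but only down to level $(2)$: from $\mathcal{T}Q=\mathcal{T}'$ with $Q$ merely bounded one gets $\|(\mathcal{T}')^*x\|=\sup_{\|v\|\leq 1}\|\langle x,\mathcal{T}'v\rangle\|=\sup_{\|v\|\leq 1}\|\langle \mathcal{T}^*x,Qv\rangle\|\leq\|Q\|\,\|\mathcal{T}^*x\|$, which is $(2)$ --- and note that in a Hilbert $C^*$-module $(2)$ does \emph{not} immediately yield $(1)$, since a norm inequality between $\|(\mathcal{T}')^*x\|$ and $\|\mathcal{T}^*x\|$ does not give the order inequality $\langle(\mathcal{T}')^*x,(\mathcal{T}')^*x\rangle\leq\lambda\langle\mathcal{T}^*x,\mathcal{T}^*x\rangle$ in $\mathcal{A}$. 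The only route to $(1)$ is $(2)\Rightarrow(3)\Rightarrow(1)$, using a genuinely adjointable $Q$ in $\mathcal{T}'(\mathcal{T}')^*=\mathcal{T}QQ^*\mathcal{T}^*\leq\|Q\|^2\mathcal{T}\mathcal{T}^*$. So every path through your cycle funnels through the unproven implication $(2)\Rightarrow(3)$, and the proposal, while correctly structured and honest about where the difficulty sits, does not prove the lemma.
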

\begin{theorem} \cite{Ches}
Let $\left\{\Upsilon_\xi\right\}_{\xi \in \Omega}$ be a c-g-Bessel sequence for $\mathcal{H}$ with respect to $\left\{\mathcal{\mathcal{H_\xi}}\right\}_{\xi \in \Omega}$ with bound $A$. Then the bounded and linear operator $\mathcal{\mathcal{T}}_{\Upsilon} : \bigoplus_{\xi \in \Omega} \mathcal{K}_{\xi}\longrightarrow\mathcal{H}$ weakly defined by
$$
\left\langle \mathcal{T}_{\Upsilon} F, g\right\rangle=\int_{\Omega}\left\langle\Upsilon_\xi^* F(\xi), g\right\rangle \mathrm{d} \nu(\xi), \quad F \in\bigoplus_{\xi \in \Omega} \mathcal{K}_{\xi}, g \in \mathcal{H},
$$ with $\left\|\mathcal{T}_{\Upsilon}\right\| \leq \sqrt{A}$.\\ Moreover, for every  $g \in \mathcal{H}$ and $\xi \in \Omega$,
$$
\mathcal{T}_{\Upsilon}^*(g)(\xi)=\Upsilon_\xi g .
$$
The operators $\mathcal{T}_{\Upsilon}$ and $\mathcal{T}_{\Upsilon}^*$ are called the synthesis and the analysis operator of $\left\{\Upsilon_\xi\right\}_{\xi \in \Omega}$ respectively.
 \end{theorem}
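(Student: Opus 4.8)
The plan is to obtain $\mathcal{T}_{\Upsilon}$ as the adjoint of the naturally associated analysis-type map, which lets me avoid arguing directly about the convergence of the weak integral. Concretely, I would first introduce $U\colon\mathcal{H}\to\bigoplus_{\xi\in\Omega}\mathcal{K}_{\xi}$ defined by $Ug=\{\Upsilon_{\xi}g\}_{\xi\in\Omega}$ and prove that $\mathcal{T}_{\Upsilon}=U^{*}$; every assertion in the statement then follows from general facts about adjointable operators.

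The first step is to check that $U$ is well defined and bounded. For $g\in\mathcal{H}$ the $\mathcal{A}$-valued inner product of $Ug$ with itself is $\langle Ug,Ug\rangle=\int_{\Omega}\langle\Upsilon_{\xi}g,\Upsilon_{\xi}g\rangle\,\mathrm{d}\nu(\xi)$, and the c-g-Bessel condition with bound $A$ gives $\langle Ug,Ug\rangle\le A\langle g,g\rangle$. Taking norms and using monotonicity of the norm on positive elements yields $\|Ug\|^{2}=\|\langle Ug,Ug\rangle\|\le A\|\langle g,g\rangle\|=A\|g\|^{2}$, so $Ug\in\bigoplus_{\xi\in\Omega}\mathcal{K}_{\xi}$ and $\|U\|\le\sqrt{A}$. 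Linearity and $\mathcal{A}$-linearity of $U$ are immediate from those of each $\Upsilon_{\xi}$ together with linearity of the integral.

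Next I would identify the adjoint. For $F\in\bigoplus_{\xi\in\Omega}\mathcal{K}_{\xi}$ and $g\in\mathcal{H}$, using the definition of the inner product on $\bigoplus_{\xi\in\Omega}\mathcal{K}_{\xi}$ and the adjointness relation $\langle F(\xi),\Upsilon_{\xi}g\rangle=\langle\Upsilon_{\xi}^{*}F(\xi),g\rangle$ for each $\Upsilon_{\xi}\in\operatorname{End}_{\mathcal{A}}^{*}(\mathcal{H},\mathcal{K}_{\xi})$, I compute $\langle F,Ug\rangle=\int_{\Omega}\langle F(\xi),\Upsilon_{\xi}g\rangle\,\mathrm{d}\nu(\xi)=\int_{\Omega}\langle\Upsilon_{\xi}^{*}F(\xi),g\rangle\,\mathrm{d}\nu(\xi)$, which is exactly $\langle\mathcal{T}_{\Upsilon}F,g\rangle$ from the weak definition. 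Hence $\langle\mathcal{T}_{\Upsilon}F,g\rangle=\langle F,Ug\rangle$ for all such $F$ and $g$, so $U$ is adjointable with $U^{*}=\mathcal{T}_{\Upsilon}$. Consequently $\mathcal{T}_{\Upsilon}$ is a bounded (indeed adjointable) $\mathcal{A}$-linear operator, $\|\mathcal{T}_{\Upsilon}\|=\|U^{*}\|=\|U\|\le\sqrt{A}$, and $\mathcal{T}_{\Upsilon}^{*}=U$, which reads $\mathcal{T}_{\Upsilon}^{*}(g)(\xi)=\Upsilon_{\xi}g$, as claimed.

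The main obstacle is analytic rather than algebraic: I must justify that the functions $\xi\mapsto\langle\Upsilon_{\xi}g,\Upsilon_{\xi}g\rangle$ and $\xi\mapsto\langle F(\xi),\Upsilon_{\xi}g\rangle$ are weakly measurable and integrable, so that the integrals above and the interchange between the integral and the inner product are legitimate; this is precisely where the standing hypotheses on $(\Omega,\nu)$ and the measurability built into the notion of a c-g-Bessel family are used. A second, subtler point is that boundedness alone does not force adjointability in a Hilbert $C^{*}$-module, so I cannot merely assert that $U^{*}$ exists; the argument must exhibit the adjoint explicitly, and this is exactly what the identity $\langle\mathcal{T}_{\Upsilon}F,g\rangle=\langle F,Ug\rangle$ accomplishes. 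Once these measurability issues are settled, the remaining verifications are routine.
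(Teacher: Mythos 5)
First, note that the paper itself offers no proof of this statement: it is imported verbatim from \cite{Ches} (Theorem 1.9 carries the citation), so your attempt can only be measured against the standard argument, which constructs the synthesis operator on a dense submodule by norm-convergent integrals and then identifies its adjoint as the analysis map. Your first step is fine: defining $U g=\{\Upsilon_\xi g\}_{\xi\in\Omega}$, the c-g-Bessel bound gives $\langle Ug,Ug\rangle\leq A\langle g,g\rangle$, hence $Ug\in\bigoplus_{\xi\in\Omega}\mathcal{K}_\xi$ and $\|U\|\leq\sqrt{A}$, and the computation $\langle F,Ug\rangle=\int_\Omega\langle F(\xi),\Upsilon_\xi g\rangle\,\mathrm{d}\nu(\xi)=\int_\Omega\langle\Upsilon_\xi^*F(\xi),g\rangle\,\mathrm{d}\nu(\xi)$ is correct and, via Cauchy--Schwarz in $\bigoplus_{\xi\in\Omega}\mathcal{K}_\xi$, shows that the $\mathcal{A}$-valued pairing $(F,g)\mapsto\int_\Omega\langle\Upsilon_\xi^*F(\xi),g\rangle\,\mathrm{d}\nu(\xi)$ is bounded by $\sqrt{A}\,\|F\|\,\|g\|$.

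The genuine gap is at the decisive step: you declare that this identity shows ``$U$ is adjointable with $U^*=\mathcal{T}_\Upsilon$,'' but the identity is phrased in terms of $\mathcal{T}_\Upsilon F$ ``from the weak definition,'' and the existence of an element $\mathcal{T}_\Upsilon F\in\mathcal{H}$ satisfying that weak formula is precisely what the theorem asserts and what must be proved. This is circular. You yourself name the obstruction correctly --- Hilbert $C^*$-modules are not self-dual, so a bounded $\mathcal{A}$-linear functional $g\mapsto\langle F,Ug\rangle^*$ need not be of the form $\langle g,h_F\rangle$, and a bounded module map such as $U$ need not possess an adjoint --- but the displayed identity does not ``exhibit the adjoint explicitly''; it only exhibits a bounded pairing, i.e.\ an element of the dual module $\mathcal{H}'$, not of $\mathcal{H}$. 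There is no Riesz representation theorem to invoke here, unlike in the Hilbert-space version of this theorem. The standard repair is constructive: define $\mathcal{T}_\Upsilon F=\int_\Omega\Upsilon_\xi^*F(\xi)\,\mathrm{d}\nu(\xi)$ as a norm-convergent (Bochner-type) integral in $\mathcal{H}$ for $F$ in a dense submodule of $\bigoplus_{\xi\in\Omega}\mathcal{K}_\xi$ (e.g.\ simple measurable fields with finite-measure support, the continuous analogue of finitely supported sequences in the discrete g-frame setting), use your pairing estimate together with $\|h\|=\sup_{\|g\|\leq1}\|\langle h,g\rangle\|$ to get $\|\mathcal{T}_\Upsilon F\|\leq\sqrt{A}\,\|F\|$ there, extend by continuity to all of $\bigoplus_{\xi\in\Omega}\mathcal{K}_\xi$, and only then pass to the limit in $\langle\mathcal{T}_\Upsilon F,g\rangle=\langle F,Ug\rangle$ to conclude that $U$ and $\mathcal{T}_\Upsilon$ are mutually adjoint, so that $\mathcal{T}_\Upsilon^*(g)(\xi)=\Upsilon_\xi g$. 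With that dense-class construction inserted, the rest of your argument (the norm bound and the identification of the analysis operator) goes through as you wrote it.
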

\begin{definition}
Let $K \in \operatorname{End}_\mathcal{A}^*(\mathcal{H})$. A sequence $\left\{\Upsilon_\xi \in \operatorname{End}_\mathcal{A}^*\left(\mathcal{H}, \mathcal{K}_\xi\right): \xi \in\right.\Omega\}$ is called a continuous $K-g$-frame for $\mathcal{H}$ with respect to $\left\{\mathcal{\mathcal{K_\xi}}\right\}_{\xi \in \Omega}$ if:
\begin{enumerate}
\item[a)] for every $h \in \mathcal{H}$, the function $\chi : \Omega \rightarrow \mathcal{K}_\xi$ defined by $\chi(\xi) =\Upsilon_\xi h$ is measurable,
\item[b)] there exist constants $0<A \leq B<\infty$ such that
$$
A \left\langle K^* h,K^* h\right\rangle\leq \int_{\Omega}\left\langle \Upsilon_\xi h,\Upsilon_\xi h\right\rangle \mathrm{d} \nu(\xi) \leq B\left\langle  h, h\right\rangle,
$$
\end{enumerate}
The numbers $A$ and $B$ are called the lower and upper $c-K-g$-frame bounds of $\left\{\Upsilon_\xi \in \operatorname{End}_\mathcal{A}^*\left(\mathcal{H}, \mathcal{K}_\xi\right)\right\}_{\xi \in \Omega}$ , respectively. If $A=B=\delta$, the $c-K-g$-frame is called $\delta$-tight and if $A=B=1$, it is called a Parseval $c-K-g$-frame. If for each $h \in \mathcal{H}$, $$\int_{\Omega}\left\langle \Upsilon_\xi h,\Upsilon_\xi h\right\rangle \mathrm{d} \nu(\xi) \leq B\left\langle  h, h\right\rangle$$ The sequence $\left\lbrace \Upsilon_\xi\right\rbrace_{\xi\in \Omega}$ is called a $c-K-g$-Bessel sequence for $\mathcal{H}$ with respect to $\left\{\mathcal{K_{\xi}}\right\}_{\xi \in \Omega}$.

we suppose that $\left\{\Upsilon_\xi\right\}_{\xi \in \Omega}$ is a c-K-g-frame for $\mathcal{H}$ with respect to $\left\{\mathcal{\mathcal{H_\xi}}\right\}_{\xi \in \Omega}$ with frame bounds $C, D$. The c-K-g-frame operator $S_{\Upsilon}$ : $\mathcal{H} \longrightarrow \mathcal{H}$  is weakly defined by 
$$
\left\langle S_{\Upsilon} f, g\right\rangle=\int_{\Omega}\left\langle \Upsilon_\xi^* \Upsilon_\xi f,  g\right\rangle \mathrm{d} \nu(\xi), \quad f, g \in \mathcal{H}
$$
Furthermore,
$$
C K K^* \leq S_{\Upsilon} \leq D I_{\mathcal{H}}
$$
\end{definition}
\begin{definition} \cite{Aliz2}
A sequence $\left\{\Upsilon_\xi \in \operatorname{End}_\mathcal{A}^*\left(\mathcal{H}, \mathcal{H}_\xi\right): \xi \in\right.\Omega\}$ is c-K-g-frame for $\mathcal{H}$. If for every $g, h \in \mathcal{H}$,
$$
\langle K g, h\rangle=\int_{\Omega}\left\langle\Upsilon_\xi^* \Phi_\xi g, h\right\rangle \mathrm{d} \nu(\xi)
$$
The c-g Bassel sequence $\left\{\Phi_\xi\right\}_{\xi \in \Omega}$ is called a dual c-K-g-Bessel sequence of $\left\{\Upsilon_\xi\right\}_{\xi \in \Omega}$
\end{definition}
\section{Some new constructing of c-K-g-frames in Hilbert $C^{\ast}$-modules }
\begin{theorem}
Let $K \in \operatorname{End}_\mathcal{A}^*(\mathcal{H})$ and $\Theta \in \operatorname{End}_\mathcal{A}^*(\mathcal{H})$,  $\Theta$ has a closed range such that $\Theta K=K \Theta$, suppose that $\left\{\Upsilon_\xi \in \operatorname{End}_\mathcal{A}^*\left(\mathcal{H}, \mathcal{H}_\xi\right)\right\}_{\xi \in \Omega}$ is a c- $K$-g-frame for $\mathcal{H}$, with bounds $A$ and $B$. If $\mathcal{R}\left(K^*\right) \cap \mathcal{N}\left(\Theta^*\right)=\{0\}$ Then for every $f\in \mathcal{H}$,
$$A\left\|(\Theta^*)^{\dagger}\right\|^{-2}\left\langle K^* f,K^* f\right\rangle \leq \int_{\Omega}\left\langle \Upsilon_\xi \Theta^*f,\Upsilon_\xi \Theta^*f\right\rangle \mathrm{d} \nu(\xi) \leq B\|\Theta^*\|^2 \left\langle  f, f\right\rangle
$$
 \end{theorem}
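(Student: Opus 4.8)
The plan is to treat the two inequalities separately, since the upper bound is an immediate consequence of the given frame estimate, whereas the lower bound requires the commutation relation together with the Moore--Penrose machinery for $\Theta^{*}$.

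For the upper bound I would substitute $h=\Theta^{*}f$ into the upper inequality of the defining c-$K$-g-frame estimate for $\{\Upsilon_\xi\}_{\xi\in\Omega}$, which gives $\int_{\Omega}\langle \Upsilon_\xi\Theta^{*}f,\Upsilon_\xi\Theta^{*}f\rangle\,\mathrm{d}\nu(\xi)\le B\langle\Theta^{*}f,\Theta^{*}f\rangle$. Applying the norm--inner-product inequality for adjointable operators (the lemma stating $\langle\mathcal{T}x,\mathcal{T}x\rangle\le\|\mathcal{T}\|^{2}\langle x,x\rangle$) with $\mathcal{T}=\Theta^{*}$ yields $\langle\Theta^{*}f,\Theta^{*}f\rangle\le\|\Theta^{*}\|^{2}\langle f,f\rangle$, so the desired upper estimate $B\|\Theta^{*}\|^{2}\langle f,f\rangle$ follows at once. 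This step is routine.

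For the lower bound I would again substitute $h=\Theta^{*}f$, now into the lower inequality, obtaining $A\langle K^{*}\Theta^{*}f,K^{*}\Theta^{*}f\rangle\le\int_{\Omega}\langle\Upsilon_\xi\Theta^{*}f,\Upsilon_\xi\Theta^{*}f\rangle\,\mathrm{d}\nu(\xi)$. The hypothesis $\Theta K=K\Theta$ gives, on taking adjoints, $K^{*}\Theta^{*}=\Theta^{*}K^{*}$, so the left-hand side equals $A\langle\Theta^{*}(K^{*}f),\Theta^{*}(K^{*}f)\rangle$. Since $\Theta$ has closed range, $\Theta^{*}$ also has closed range, hence the Moore--Penrose inverse $(\Theta^{*})^{\dagger}$ exists and $(\Theta^{*})^{\dagger}\Theta^{*}$ is the orthogonal projection of $\mathcal{H}$ onto $\mathcal{R}(\Theta)=\mathcal{N}(\Theta^{*})^{\perp}$ (by the projection formulas for the Moore--Penrose inverse, applied to $\Theta^{*}$). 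The crucial identity to secure is
\[
(\Theta^{*})^{\dagger}\Theta^{*}K^{*}f=K^{*}f,
\]
equivalently $K^{*}f\in\mathcal{R}(\Theta)$; this is exactly where the hypothesis $\mathcal{R}(K^{*})\cap\mathcal{N}(\Theta^{*})=\{0\}$, together with the commutation relation and the closedness of the ranges, must be used. Granting this identity, I would apply the norm--inner-product inequality to $(\Theta^{*})^{\dagger}$ with $x=\Theta^{*}K^{*}f$, obtaining $\langle K^{*}f,K^{*}f\rangle=\langle(\Theta^{*})^{\dagger}\Theta^{*}K^{*}f,(\Theta^{*})^{\dagger}\Theta^{*}K^{*}f\rangle\le\|(\Theta^{*})^{\dagger}\|^{2}\langle\Theta^{*}K^{*}f,\Theta^{*}K^{*}f\rangle$, which rearranges to $\langle\Theta^{*}K^{*}f,\Theta^{*}K^{*}f\rangle\ge\|(\Theta^{*})^{\dagger}\|^{-2}\langle K^{*}f,K^{*}f\rangle$. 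Chaining this with the frame lower bound produces the asserted left-hand estimate.

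The main obstacle is precisely the lower bound, and within it the identity $(\Theta^{*})^{\dagger}\Theta^{*}K^{*}f=K^{*}f$: one must show that $K^{*}f$ avoids $\mathcal{N}(\Theta^{*})$ in the strong sense of lying in $\mathcal{R}(\Theta)=\mathcal{N}(\Theta^{*})^{\perp}$, so that the projection $(\Theta^{*})^{\dagger}\Theta^{*}$ acts as the identity on it. The commutation $K^{*}\Theta^{*}=\Theta^{*}K^{*}$ shows that $\mathcal{N}(\Theta^{*})$ is $K^{*}$-invariant, and I would combine this with the intersection hypothesis and the orthogonal decomposition $\mathcal{H}=\mathcal{R}(\Theta)\oplus\mathcal{N}(\Theta^{*})$ afforded by the closed range in order to force the $\mathcal{N}(\Theta^{*})$-component of $K^{*}f$ to vanish. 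Everything else, namely the two substitutions and the two applications of the norm--inner-product inequality, is mechanical.
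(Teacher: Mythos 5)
Your architecture reproduces the paper's proof exactly: the upper bound by substituting $h=\Theta^*f$ into the Bessel inequality and applying $\langle\mathcal{T}x,\mathcal{T}x\rangle\le\|\mathcal{T}\|^2\langle x,x\rangle$ with $\mathcal{T}=\Theta^*$; the lower bound via $K^*\Theta^*=\Theta^*K^*$ and the Moore--Penrose inverse, hinging on the identity $(\Theta^*)^{\dagger}\Theta^*K^*f=K^*f$, i.e.\ $K^*f\in\mathcal{R}(\Theta)$ (correctly so, since $(\Theta^*)^{\dagger}\Theta^*=(\Theta\Theta^{\dagger})^*=\Theta\Theta^{\dagger}$ is the orthogonal projection onto $\mathcal{R}(\Theta)$). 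The paper's proof opens its chain with $\langle K^*f,K^*f\rangle=\langle\Theta\Theta^{\dagger}K^*f,\Theta\Theta^{\dagger}K^*f\rangle$, which is precisely your identity, asserted there with no justification beyond citing the hypotheses. So you have faithfully reconstructed the argument and, to your credit, isolated the one step that is not mechanical.

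However, the plan you sketch for closing that step fails, and the gap is genuine. Invariance of $\mathcal{N}(\Theta^*)$ under $K^*$ together with $\mathcal{R}(K^*)\cap\mathcal{N}(\Theta^*)=\{0\}$ does not force the $\mathcal{N}(\Theta^*)$-component of $K^*f$ to vanish: the intersection hypothesis only excludes elements of $\mathcal{R}(K^*)$ lying \emph{entirely} in $\mathcal{N}(\Theta^*)$, and says nothing about components in the decomposition $\mathcal{H}=\mathcal{R}(\Theta)\oplus\mathcal{N}(\Theta^*)$. Indeed no argument can close the gap from the stated hypotheses. Take $\mathcal{H}=\mathbb{C}^2$ and $\Theta=K=\begin{pmatrix}1&1\\0&0\end{pmatrix}$, an idempotent with closed range: then $\Theta K=K\Theta$, $\mathcal{R}(K^*)=\operatorname{span}\{(1,1)\}$ meets $\mathcal{N}(\Theta^*)=\operatorname{span}\{(0,1)\}$ trivially, yet $\mathcal{R}(K^*)\not\subseteq\mathcal{R}(\Theta)=\operatorname{span}\{(1,0)\}$, so $(\Theta^*)^{\dagger}\Theta^*K^*f\ne K^*f$. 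Worse, the theorem itself fails here: with the single-member frame $\Upsilon_1=\Theta^*$ (a c-$K$-g-frame with $A=1$, $B=2$, and here $K^*\Theta^*=\Theta^*$, $\|(\Theta^*)^{\dagger}\|^{-2}=2$), the asserted lower bound at $f=e_1$ reads $2\langle K^*e_1,K^*e_1\rangle\le\langle K^*e_1,K^*e_1\rangle$, which is false. What the argument actually needs is the stronger hypothesis $\mathcal{R}(K^*)\subseteq\mathcal{R}(\Theta)$, under which your identity is immediate and every remaining step of your proposal goes through verbatim. In fairness, the paper's own proof silently assumes the same unjustified identity, so your proposal is incomplete at exactly the point where the published argument is too --- but your stated mechanism (invariance plus trivial intersection plus the orthogonal decomposition) cannot repair it.
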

\begin{proof}
suppose that $\left\{\Upsilon_\xi\right\}_{\xi \in \Omega}$ is a c- $K$-g-frame for $\mathcal{H}$ with respect to $\left\{\mathcal{\mathcal{H_\xi}}\right\}_{\xi \in \Omega}$, with bounds $A$ and $B$, 
we have for $f\in \mathcal{H}$
 $$\int_{\Omega}\left\langle \Upsilon_\xi \Theta^*f,\Upsilon_\xi \Theta^*f\right\rangle \mathrm{d} \nu(\xi)\leq B\left\langle  \Theta^*f, \Theta^*f\right\rangle\leq B \|\Theta^*\|^2\left\langle f, f\right\rangle$$
On the other hand, Since $\Theta K=K \Theta$, we have $K^* \Theta^*=\Theta^* K^*$. We assume that$\mathcal{R}\left(K^*\right) \cap \mathcal{N}\left(\Theta^*\right)=\{0\}$ and $\Theta$ has closed range, using Lemma \textbf{1.7} for every $f \in \mathcal{H}$,
$$
\begin{aligned}
\left\langle K^* f,K^* f\right\rangle & =\left\langle \Theta \Theta^{\dagger} K^* f, \Theta \Theta^{\dagger} K^* f\right\rangle \\&=\left\langle\left(\Theta^{\dagger}\right)^* \Theta^* K^* f,\left(\Theta^{\dagger}\right)^* \Theta^* K^* f\right\rangle \\&=\left\langle\left(\Theta^*\right)^{\dagger}  K^* \Theta^* f,\left(\Theta^*\right)^{\dagger} K^* \Theta^* f\right\rangle \\
& \leq\left\|(\Theta^*)^{\dagger}\right\|^2\left\langle K^* \Theta^* f,K^* \Theta^* f\right\rangle
\end{aligned}
$$
Hence for each $f \in \mathcal{H}$,
$$
\int_{\Omega}\left\langle \Upsilon_\xi \Theta^* f,\Upsilon_\xi \Theta^* f\right\rangle \mathrm{d} \nu(\xi) \geq A\left\langle K^* \Theta^* f, K^* \Theta^* f\right\rangle \geq A\left\|\left(\Theta^*\right)^{\dagger} \right\|^{-2}\left\langle K^* f,K^* f\right\rangle .
$$
\end{proof}
\begin{corollary}
Let $K \in \operatorname{End}_\mathcal{A}^*(\mathcal{H})$ such that $\overline{\mathcal{R}(K)}=\mathcal{H}$, $\Theta \in \operatorname{End}_\mathcal{A}^*(\mathcal{H})$ has closed range and $\Theta K=K \Theta$. If $\left\{\Upsilon_\xi \Theta\right\}_{\xi \in \Omega}$ and $\left\{\Upsilon_\xi \Theta^*\right\}_{\xi \in \Omega}$ are both $c$ - $K$-g-frames for $\mathcal{H}$ with respect to $\left\{\mathcal{\mathcal{H_\xi}}\right\}_{\xi \in \Omega}$, then $\left\{\Upsilon_\xi\right\}_{\xi \in \Omega}$ is a $c-K$-g-frame for $\mathcal{H}$ with respect to $\left\{\mathcal{\mathcal{H_\xi}}\right\}_{\xi \in \Omega}$.
 \end{corollary}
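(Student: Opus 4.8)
The plan is to show that the two frame hypotheses together force $\Theta$ to be invertible in $\operatorname{End}_\mathcal{A}^*(\mathcal{H})$, and then to recover $\{\Upsilon_\xi\}$ by composing the known frame $\{\Upsilon_\xi\Theta^*\}$ with the bounded operator $(\Theta^*)^{-1}$. Almost all of the work sits in the invertibility step; once it is available, both frame bounds for $\{\Upsilon_\xi\}$ come out of a single substitution.

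First I would turn each frame hypothesis into a range inclusion. Since $\{\Upsilon_\xi\Theta\}$ is a $c$-$K$-$g$-frame it is in particular $c$-$g$-Bessel, so by Theorem \textbf{1.9} its synthesis operator $\mathcal{T}_{\Upsilon\Theta}$ exists and is adjointable, and the lower bound reads $A_1 KK^*\le \mathcal{T}_{\Upsilon\Theta}\mathcal{T}_{\Upsilon\Theta}^*$. Applying Lemma \textbf{1.8} (equivalence of $(1)$ and $(4)$, with $\mathcal{T}=\mathcal{T}_{\Upsilon\Theta}$ and $\mathcal{T}'=K$) gives $\mathcal{R}(K)\subseteq\mathcal{R}(\mathcal{T}_{\Upsilon\Theta})$. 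From the weak definition $\langle \mathcal{T}_{\Upsilon\Theta}F,g\rangle=\int_\Omega\langle\Upsilon_\xi^* F(\xi),\Theta g\rangle\,\mathrm{d}\nu(\xi)$ one sees that $\langle \mathcal{T}_{\Upsilon\Theta}F,g\rangle=0$ whenever $g\in\mathcal{N}(\Theta)$, so $\mathcal{R}(\mathcal{T}_{\Upsilon\Theta})\perp\mathcal{N}(\Theta)$; as $\Theta$ has closed range, $\mathcal{N}(\Theta)^\perp=\mathcal{R}(\Theta^*)$ and hence $\mathcal{R}(\mathcal{T}_{\Upsilon\Theta})\subseteq\mathcal{R}(\Theta^*)$. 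Combining, $\mathcal{R}(K)\subseteq\mathcal{R}(\Theta^*)$, and since $\mathcal{R}(\Theta^*)$ is closed (Lemma \textbf{1.7}$(1)$) while $\overline{\mathcal{R}(K)}=\mathcal{H}$, I conclude $\mathcal{R}(\Theta^*)=\mathcal{H}$, i.e. $\Theta^*$ is surjective. Running the identical argument with $\{\Upsilon_\xi\Theta^*\}$ gives $\mathcal{R}(K)\subseteq\mathcal{R}(\Theta)=\mathcal{H}$, so $\Theta$ is surjective as well.

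Next, by Lemma \textbf{1.5} the surjectivity of $\Theta$ makes $\Theta^*$ bounded below and the surjectivity of $\Theta^*$ makes $\Theta$ bounded below; thus $\Theta$ is injective with closed range and surjective, hence bijective with bounded inverse $\Theta^{-1}\in\operatorname{End}_\mathcal{A}^*(\mathcal{H})$. Finally I would transfer the bounds using $\Upsilon_\xi=(\Upsilon_\xi\Theta^*)(\Theta^*)^{-1}$: substituting $(\Theta^*)^{-1}f$ into the inequalities for $\{\Upsilon_\xi\Theta^*\}$, the upper bound gives $\int_\Omega\langle\Upsilon_\xi f,\Upsilon_\xi f\rangle\,\mathrm{d}\nu(\xi)\le B_2\langle(\Theta^*)^{-1}f,(\Theta^*)^{-1}f\rangle\le B_2\|(\Theta^*)^{-1}\|^2\langle f,f\rangle$, while for the lower bound the commutation $\Theta K=K\Theta$ yields $K^*\Theta^*=\Theta^* K^*$, hence $K^*(\Theta^*)^{-1}=(\Theta^*)^{-1}K^*$, and therefore $\int_\Omega\langle\Upsilon_\xi f,\Upsilon_\xi f\rangle\,\mathrm{d}\nu(\xi)\ge A_2\langle(\Theta^*)^{-1}K^* f,(\Theta^*)^{-1}K^* f\rangle\ge A_2\|\Theta^*\|^{-2}\langle K^* f,K^* f\rangle$, the last inequality being Lemma \textbf{1.4} applied to $\Theta^*$ and $z=(\Theta^*)^{-1}K^* f$.

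The main obstacle is the first step, the passage from the two frame inequalities to the invertibility of $\Theta$. The key point is that neither frame alone is enough: $\{\Upsilon_\xi\Theta^*\}$ forces $\Theta$ to be surjective and $\{\Upsilon_\xi\Theta\}$ forces $\Theta^*$ to be surjective, and it is exactly the closedness of $\mathcal{R}(\Theta)$ together with $\overline{\mathcal{R}(K)}=\mathcal{H}$ that upgrades the dense-range information delivered by Lemma \textbf{1.8} to genuine surjectivity. One technical point to watch is that invoking Lemma \textbf{1.8} requires $\overline{\mathcal{R}(\mathcal{T}_{\Upsilon\Theta}^*)}$ (respectively $\overline{\mathcal{R}(\mathcal{T}_{\Upsilon\Theta^*}^*)}$) to be orthogonally complemented; I would either verify this directly from the frame bounds or record it among the standing hypotheses under which the lemma is applied.
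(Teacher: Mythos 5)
Your proof is correct and ends with the same overall structure as the paper's argument --- establish that $\Theta$ is bijective, then recover $\{\Upsilon_\xi\}$ as $\{(\Upsilon_\xi\Theta^*)(\Theta^*)^{-1}\}$ and transfer the bounds --- but your invertibility step runs through genuinely different machinery. The paper argues pointwise from the frame inequalities: if $\Theta^* f=0$, the lower bound for $\{\Upsilon_\xi\Theta^*\}$ forces $K^*f=0$, so $\mathcal{N}(\Theta^*)\subseteq\mathcal{N}(K^*)=\{0\}$, whence $\mathcal{H}=\mathcal{N}(K^*)^{\perp}\subseteq\mathcal{N}(\Theta^*)^{\perp}=\mathcal{R}(\Theta)$ by closedness of the range, giving surjectivity; symmetrically the frame $\{\Upsilon_\xi\Theta\}$ gives $\mathcal{N}(\Theta)\subseteq\mathcal{N}(K^*)=\{0\}$, hence injectivity. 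You instead convert each lower frame bound into the operator inequality $A\,KK^*\le\mathcal{T}\mathcal{T}^*$ for the relevant synthesis operator and invoke Lemma 1.8 to extract $\mathcal{R}(K)\subseteq\mathcal{R}(\mathcal{T})\subseteq\mathcal{N}(\Theta)^{\perp}$ (resp.\ $\mathcal{N}(\Theta^*)^{\perp}$). This works, but it is strictly heavier, and the technical point you flag is a real cost: each application of Lemma 1.8 needs $\overline{\mathcal{R}(\mathcal{T}^*)}$ to be orthogonally complemented, a hypothesis the corollary does not supply and which does not follow from the frame bounds in a general Hilbert $C^*$-module, whereas the paper's kernel-inclusion argument obtains the same two facts with no such assumption. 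Indeed your own observation that $\mathcal{R}(\mathcal{T}_{\Upsilon\Theta})\perp\mathcal{N}(\Theta)$ shows the Douglas-type lemma is doing work the frame inequality does for free: testing the lower bound on elements of $\mathcal{N}(\Theta)$ (or $\mathcal{N}(\Theta^*)$) directly yields the needed inclusions, so if you replace the Lemma 1.8 step by that observation your proof becomes hypothesis-free and essentially coincides with the paper's. Your closing computation --- substituting $(\Theta^*)^{-1}f$, using $K^*(\Theta^*)^{-1}=(\Theta^*)^{-1}K^*$ from $\Theta K=K\Theta$, and bounding via Lemma 1.4 --- is exactly the content of Theorem 2.1 applied to $\Theta^{-1}$, which is how the paper finishes (it verifies $\Theta^{-1}K=K\Theta^{-1}$ and $\mathcal{R}(K^*)\cap\mathcal{N}((\Theta^{-1})^*)=\{0\}$ and cites the theorem rather than redoing the estimate); your constants $A_2\|\Theta^*\|^{-2}$ and $B_2\|(\Theta^*)^{-1}\|^2$ agree with the theorem's bounds, since $((\Theta^{-1})^*)^{\dagger}=\Theta^*$ when $\Theta$ is invertible.
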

\begin{proof}
As such $\overline{\mathcal{R}(K)}=\mathcal{H}$, thus  $\mathcal{N}\left(K^*\right)=\{0\}$ and $\mathcal{N}\left(K^*\right)^{\perp}=\mathcal{H}$ . For every $f \in \mathcal{H}$, we have
$$
A\left\langle K^* f,K^* f \right\rangle \leq \int_{\Omega}\left\langle \Upsilon_\xi \Theta^* f,\Upsilon_\xi \Theta^* f \right\rangle \mathrm{d} \nu(\xi),
$$
Hence $\mathcal{N}\left(K^*\right) \supseteq \mathcal{N}\left(\Theta^*\right)$, so
$$
\mathcal{H}=\mathcal{N}\left(K^*\right)^{\perp} \subseteq \mathcal{N}\left(\Theta^*\right)^{\perp}=\mathcal{R}(\Theta) .
$$
consequently $\Theta$ is surjective. Also, for every $f \in \mathcal{H}$,
$$
A\left\langle K^* f,K^* f \right\rangle \leq \int_{\Omega}\left\langle \Upsilon_\xi \Theta f,\Upsilon_\xi \Theta f \right\rangle \mathrm{d} \nu(\xi),
$$
thus $ \mathcal{N}(\Theta) \subseteq \mathcal{N}\left(K^*\right)=\{0\}$. Therefore $\Theta$ is invertible. Since $\Theta K=K \Theta$, we have $ \Theta^{-1} K=K \Theta^{-1}, \mathcal{R}\left(K^*\right) \cap \mathcal{N}\left(\left(\Theta^{-1}\right)^*\right)=\{0\}$, and $$\left\{\Upsilon_\xi :\xi \in \Omega \right\}=\left\{\Upsilon_\xi\left( \Theta^{-1}\Theta\right)^* :\xi \in \Omega\right\}=\left\{\left(\Upsilon_\xi \Theta^*\right)\left(\Theta^{-1}\right)^* :\xi \in \Omega \right\}.$$ Hence, according to Theorem 2.1, We conclude that $\left\{\Upsilon_\xi\right\}_{\xi \in \Omega}$ is a c-K-g-frame for $\mathcal{H}$.
\end{proof}
%For a certain tight c-K-g-frame $\left\{\Upsilon_\xi\right\}_{\xi \in \Omega}$ of $\mathcal{H}$, We can get another  c-K-g-frame for $\mathcal{H}$. We are given necessary and sufficient conditions in the following theorem in order that $\left\{\Upsilon_\xi \Theta^*\right\}_{\xi \in \Omega}$ to be a c-K-g-frame for $\mathcal{H}$.
\begin{theorem}
Let $\Theta,K \in \operatorname{End}_\mathcal{A}^*(\mathcal{H})$ and  $\left\{\Upsilon_\xi \in \operatorname{End}_\mathcal{A}^*\left(\mathcal{H}, \mathcal{H}_\xi\right): \xi \in\right.\Omega\}$ be a $\delta$-tight c- $K$ $g$-frame for $\mathcal{H}$. If $\Theta K=K \Theta$ and $K^*$ is bounded below. Then the following assertions are equivalent:
\begin{enumerate}
\item[(i)]$\Theta$ is surjective.
\item[(ii)]$\left\{\Upsilon_\xi \Theta^*\right\}_{\xi \in \Omega}$ is a c-K-g-frame for $\mathcal{H}$.
\end{enumerate}
 \end{theorem}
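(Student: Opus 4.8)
The plan is to prove the two implications separately, with Lemma 1.5 (the equivalence between surjectivity of an operator and its adjoint being bounded below) as the main tool in both directions. Throughout I will use the $\delta$-tight hypothesis in the form $\delta\langle K^*h,K^*h\rangle \le \int_\Omega \langle \Upsilon_\xi h,\Upsilon_\xi h\rangle\,\mathrm{d}\nu(\xi)\le \delta\langle h,h\rangle$ for all $h\in\mathcal H$, and I will read the hypothesis ``$K^*$ bounded below'' through Lemma 1.5 as the existence of $m>0$ with $\langle K^*h,K^*h\rangle\ge m\langle h,h\rangle$ for all $h$.

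For (i) $\Rightarrow$ (ii), assume $\Theta$ is surjective. Measurability of $\xi\mapsto \Upsilon_\xi\Theta^* h$ is immediate, since it is $\Upsilon_\xi$ evaluated at the fixed vector $\Theta^*h$. For the upper frame bound I will apply the upper $\delta$-inequality to $\Theta^*f$ and then Lemma 1.4, obtaining $\int_\Omega \langle \Upsilon_\xi\Theta^*f,\Upsilon_\xi\Theta^*f\rangle\,\mathrm{d}\nu(\xi)\le \delta\langle\Theta^*f,\Theta^*f\rangle\le \delta\|\Theta^*\|^2\langle f,f\rangle$. The lower bound is the substantive step: applying the lower $\delta$-inequality to $\Theta^*f$ gives $\int_\Omega\langle\Upsilon_\xi\Theta^*f,\Upsilon_\xi\Theta^*f\rangle\,\mathrm{d}\nu(\xi)\ge \delta\langle K^*\Theta^*f,K^*\Theta^*f\rangle$, and here the commutation relation, which yields $K^*\Theta^*=\Theta^*K^*$, lets me rewrite the right-hand side as $\delta\langle\Theta^*K^*f,\Theta^*K^*f\rangle$. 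Since $\Theta$ is surjective, Lemma 1.5 furnishes $\mu>0$ with $\langle\Theta^*g,\Theta^*g\rangle\ge\mu\langle g,g\rangle$ for every $g$; taking $g=K^*f$ produces the required lower bound $\delta\mu\langle K^*f,K^*f\rangle$, so that $\{\Upsilon_\xi\Theta^*\}_{\xi\in\Omega}$ is a c-K-g-frame with bounds $\delta\mu$ and $\delta\|\Theta^*\|^2$.

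For (ii) $\Rightarrow$ (i), assume $\{\Upsilon_\xi\Theta^*\}_{\xi\in\Omega}$ is a c-K-g-frame, so there is $A'>0$ with $A'\langle K^*f,K^*f\rangle\le \int_\Omega\langle\Upsilon_\xi\Theta^*f,\Upsilon_\xi\Theta^*f\rangle\,\mathrm{d}\nu(\xi)$ for all $f$. Combining this with the upper $\delta$-inequality applied to $\Theta^*f$ gives $A'\langle K^*f,K^*f\rangle\le \delta\langle\Theta^*f,\Theta^*f\rangle$, and then the bounded-below hypothesis on $K^*$, in the form $\langle K^*f,K^*f\rangle\ge m\langle f,f\rangle$, yields $A'm\langle f,f\rangle\le\delta\langle\Theta^*f,\Theta^*f\rangle$. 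Hence $\Theta^*$ is bounded below with respect to the inner product, and Lemma 1.5 concludes that $\Theta$ is surjective.

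I expect the main obstacle to be the lower-bound estimate in (i) $\Rightarrow$ (ii): one must pass the commutation $\Theta K=K\Theta$ to the adjoints and insert it at exactly the right place, so that the surjectivity of $\Theta$ (equivalently, $\Theta^*$ bounded below, by Lemma 1.5) can be applied to $K^*f$ rather than to $\Theta^*f$. The other estimates are routine uses of Lemma 1.4, the tight-frame inequalities, and the bounded-below hypothesis on $K^*$; note in particular that this last hypothesis is what makes the $(\mathrm{ii})\Rightarrow(\mathrm{i})$ direction work, by allowing the factor $\langle K^*f,K^*f\rangle$ to be absorbed into $\langle f,f\rangle$.
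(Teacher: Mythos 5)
Your proof is correct, and while it leans on the same key tool as the paper (Lemma 1.5, surjectivity of an operator being equivalent to its adjoint being bounded below), the mechanics differ in both directions. For (i) $\Rightarrow$ (ii) the paper does not argue directly: it invokes its Theorem 2.1, whose proof runs through the Moore--Penrose inverse, writing $\langle K^*f,K^*f\rangle=\langle \Theta\Theta^{\dagger}K^*f,\Theta\Theta^{\dagger}K^*f\rangle$ and extracting the constant $\|(\Theta^*)^{\dagger}\|^{-2}$; your direct route replaces this pseudo-inverse machinery with the bounded-below constant $\mu$ for $\Theta^*$ from Lemma 1.5 applied at $g=K^*f$, which is more elementary when $\Theta$ is outright surjective (the paper's detour buys reuse of a result valid under the weaker hypotheses of closed range and $\mathcal{R}(K^*)\cap\mathcal{N}(\Theta^*)=\{0\}$). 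For (ii) $\Rightarrow$ (i) the divergence is more substantive: the paper uses $\delta$-tightness in the \emph{equality} form $\delta\langle K^*h,K^*h\rangle=\int_\Omega\langle \Upsilon_\xi h,\Upsilon_\xi h\rangle\,\mathrm{d}\nu(\xi)$ to show that $(K\Theta)^*=\Theta^*K^*$ is bounded below, concludes $K\Theta$ is surjective, and only then uses the commutation $\Theta K=K\Theta$ to pass from surjectivity of $K\Theta$ to surjectivity of $\Theta$; you instead bound $\Theta^*$ below directly via the chain $A'm\langle f,f\rangle\leq A'\langle K^*f,K^*f\rangle\leq \delta\langle \Theta^*f,\Theta^*f\rangle$, which needs only the upper Bessel bound of the original frame and uses neither the commutation relation nor the tightness equality in this direction. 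This makes your argument both shorter and slightly more general, and it sidesteps a small delicacy in the paper: the equality $\int_\Omega\langle\Upsilon_\xi h,\Upsilon_\xi h\rangle\,\mathrm{d}\nu(\xi)=\delta\langle K^*h,K^*h\rangle$ is a strict reading of ``$\delta$-tight'' that does not literally follow from the paper's definition $A=B=\delta$ (which gives only $\delta\langle K^*h,K^*h\rangle\leq\int_\Omega\langle\Upsilon_\xi h,\Upsilon_\xi h\rangle\,\mathrm{d}\nu(\xi)\leq\delta\langle h,h\rangle$), whereas your proof uses only the two-sided inequality and is immune to that ambiguity.
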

\begin{proof}
$(i)\Rightarrow(ii)$ The first part of proof is implied by Theorem $2.1$. .\\
$(ii)\Rightarrow(i)$ Suppose that for all $f \in \mathcal{H}$
$$
A\left\langle K^* f,K^* f \right\rangle \leq \int_{\Omega}\left\langle \Upsilon_\xi \Theta^* f,\Upsilon_\xi \Theta^* f \right\rangle \mathrm{d} \nu(\xi) \leq B\left\langle  f, f\right\rangle .
$$
Moreover for every $h \in \mathcal{H}$,
$$
\delta\left\langle K^* h,K^* h\right\rangle=\int_{\Omega}\left\langle\Upsilon_\xi h,\Upsilon_\xi h\right\rangle \mathrm{d} \nu(\xi) .
$$
since $\Theta^* K^*=K^* \Theta^*$, we obtain
$$
\delta\left\langle \Theta^* K^* f,\Theta^* K^* f\right\rangle=\int_{\Omega}\left\langle \Upsilon_\xi \Theta^* f,\Upsilon_\xi \Theta^* f \right\rangle \mathrm{d} \nu(\xi), \quad f \in \mathcal{H} .
$$
Hence
$$
\left\langle \Theta^* K^* f,\Theta^* K^* f\right\rangle=\delta^{-1} \int_{\Omega}\left\langle \Upsilon_\xi \Theta^* f,\Upsilon_\xi \Theta^* f \right\rangle \mathrm{d} \nu(\xi) \geq \delta^{-1} A\left\langle K^* f,K^* f \right\rangle .
$$
Since $K^*$ is bounded below, by Lemma \textbf{1.5} there exist $\alpha>0$ such that $$\left\langle K^* f,K^* f\right\rangle \geq\alpha\left\langle f ,f \right\rangle.$$ So, we conclude that for every $f \in \mathcal{H},$
$$
\left\langle \Theta^* K^* f,\Theta^* K^* f\right\rangle \geq \delta^{-1} \alpha A\left\langle f ,f \right\rangle .
$$
Consequently $(K \Theta )^*=\Theta^* K^*$ is bounded below, so by Lemma 1.5, $ K \Theta$ is surjective and since $K$ and $ \Theta$ commute, implies that $\Theta$ is surjective.
\end{proof}
Assume that operators $\mathcal{T}, \Theta \in \operatorname{End}_\mathcal{A}^*(\mathcal{H})$ and $\mathcal{T}^*$ conserves a c-K-g-frame for $\mathcal{R}(\mathcal{T})$. We establish some requirements on $K, \Theta$ and $\mathcal{T}$ in the preceding theorem such that $\Theta^*$ can also conserve the same c-K-g-frame for $\mathcal{R}(\Theta)$.
\begin{theorem}
Let $\left\{\Upsilon_\xi \in \operatorname{End}_\mathcal{A}^*\left(\mathcal{H}, \mathcal{H}_\xi\right): \xi \in\right.\Omega\}$ be a c-K-g-frame for $\mathcal{H}$. Assume that $ \Theta, \mathcal{T} \in \operatorname{End}_\mathcal{A}^*(\mathcal{H})$ have close ranges, and $\mathcal{N}(\Theta)=\mathcal{N}(\mathcal{T})$ with $\mathcal{R}\left(K^*\right) \cap \mathcal{N}\left(\Theta^*\right)=\{0\}$ and $K \Theta \mathcal{T}^{\dagger}=\Theta \mathcal{T}^{\dagger} K$. If $\left\{\Upsilon_\xi \mathcal{T}^*\right\}_{\xi \in \Omega}$ is a c-K-g-frame for $\mathcal{R}(\mathcal{T})$, then $\left\{\Upsilon_\xi \Theta^*\right\}_{\xi \in \Omega}$ is a c- $K$-g-frame for $\mathcal{R}(\Theta)$.

 \end{theorem}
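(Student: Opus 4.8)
The plan is to factor $\Theta$ through $\mathcal{T}$ and then transport the frame inequalities of $\{\Upsilon_\xi\mathcal{T}^*\}$ to $\{\Upsilon_\xi\Theta^*\}$. First I would introduce $Q:=\Theta\mathcal{T}^{\dagger}\in\operatorname{End}_\mathcal{A}^*(\mathcal{H})$. Because $\mathcal{T}$ has closed range, Lemma 1.7 says that $\mathcal{T}^{\dagger}\mathcal{T}$ is the orthogonal projection onto $\mathcal{R}(\mathcal{T}^{\dagger})=\mathcal{N}(\mathcal{T})^{\perp}$; combining this with $\mathcal{N}(\mathcal{T})=\mathcal{N}(\Theta)$ gives $\Theta\mathcal{T}^{\dagger}\mathcal{T}=\Theta$, that is $\Theta=Q\mathcal{T}$ and hence $\Theta^{*}=\mathcal{T}^{*}Q^{*}$, so that $\Upsilon_\xi\Theta^{*}=(\Upsilon_\xi\mathcal{T}^{*})Q^{*}$ for every $\xi\in\Omega$. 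I would then record four properties of $Q$ that drive the rest of the argument: (a) $\mathcal{R}(Q)=\mathcal{R}(\Theta)$ is closed, since $\mathcal{R}(Q)\subseteq\mathcal{R}(\Theta)=\mathcal{R}(Q\mathcal{T})\subseteq\mathcal{R}(Q)$; (b) therefore $\mathcal{N}(Q^{*})=\mathcal{R}(Q)^{\perp}=\mathcal{R}(\Theta)^{\perp}=\mathcal{N}(\Theta^{*})$; (c) $Q^{*}=(\mathcal{T}^{*})^{\dagger}\Theta^{*}$ (using Lemma 1.7) has range contained in $\mathcal{R}\big((\mathcal{T}^{*})^{\dagger}\big)=\mathcal{R}(\mathcal{T})$, so $Q^{*}f\in\mathcal{R}(\mathcal{T})$ for all $f$; and (d) the hypothesis $K\Theta\mathcal{T}^{\dagger}=\Theta\mathcal{T}^{\dagger}K$ is exactly $KQ=QK$, whence $K^{*}Q^{*}=Q^{*}K^{*}$.

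I would dispose of the upper bound first, since it costs nothing. As $\{\Upsilon_\xi\}$ is a $c$-$K$-$g$-frame for $\mathcal{H}$ it is in particular a $c$-Bessel sequence with some bound $B_{1}$, so by Lemma 1.4 every $f\in\mathcal{H}$ satisfies
$$\int_{\Omega}\left\langle\Upsilon_\xi\Theta^{*}f,\Upsilon_\xi\Theta^{*}f\right\rangle\mathrm{d}\nu(\xi)\le B_{1}\left\langle\Theta^{*}f,\Theta^{*}f\right\rangle\le B_{1}\|\Theta\|^{2}\left\langle f,f\right\rangle,$$
which is the Bessel (upper) inequality on $\mathcal{R}(\Theta)$.

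For the lower bound I would fix $f\in\mathcal{R}(\Theta)$ and let $A$ denote the lower bound of $\{\Upsilon_\xi\mathcal{T}^{*}\}$ on $\mathcal{R}(\mathcal{T})$. By (c) the vector $Q^{*}f$ lies in $\mathcal{R}(\mathcal{T})$, so I may feed it into the lower $c$-$K$-$g$-frame inequality for $\{\Upsilon_\xi\mathcal{T}^{*}\}$; using (d) this yields
$$\int_{\Omega}\left\langle\Upsilon_\xi\Theta^{*}f,\Upsilon_\xi\Theta^{*}f\right\rangle\mathrm{d}\nu(\xi)=\int_{\Omega}\left\langle\Upsilon_\xi\mathcal{T}^{*}(Q^{*}f),\Upsilon_\xi\mathcal{T}^{*}(Q^{*}f)\right\rangle\mathrm{d}\nu(\xi)\ge A\left\langle Q^{*}K^{*}f,Q^{*}K^{*}f\right\rangle.$$
It then remains to bound $\langle Q^{*}K^{*}f,Q^{*}K^{*}f\rangle$ below by a multiple of $\langle K^{*}f,K^{*}f\rangle$. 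Here I would reproduce the Moore--Penrose estimate from the proof of Theorem 2.1 with $Q$ in place of $\Theta$: by (a), (b) and Lemma 1.7, $(Q^{*})^{\dagger}Q^{*}$ is the orthogonal projection of $\mathcal{H}$ onto $\mathcal{R}(Q)=\mathcal{R}(\Theta)$ and $(Q^{*})^{\dagger}=(Q^{\dagger})^{*}$. If $K^{*}f\in\mathcal{R}(\Theta)$, then $(Q^{*})^{\dagger}Q^{*}K^{*}f=K^{*}f$, so Lemma 1.4 gives
$$\left\langle K^{*}f,K^{*}f\right\rangle=\left\langle(Q^{*})^{\dagger}Q^{*}K^{*}f,(Q^{*})^{\dagger}Q^{*}K^{*}f\right\rangle\le\big\|(Q^{*})^{\dagger}\big\|^{2}\left\langle Q^{*}K^{*}f,Q^{*}K^{*}f\right\rangle,$$
and combining the last two displays produces the required
$$\int_{\Omega}\left\langle\Upsilon_\xi\Theta^{*}f,\Upsilon_\xi\Theta^{*}f\right\rangle\mathrm{d}\nu(\xi)\ge A\big\|(Q^{*})^{\dagger}\big\|^{-2}\left\langle K^{*}f,K^{*}f\right\rangle.$$

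The step I expect to be the main obstacle is precisely the proviso $K^{*}f\in\mathcal{R}(\Theta)$ for $f\in\mathcal{R}(\Theta)$ --- equivalently, that $Q^{*}$ admits a bounded-below estimate on $K^{*}\mathcal{R}(\Theta)$ rather than mere injectivity there. This is exactly where the two standing hypotheses must be used together: by (b) the condition $\mathcal{R}(K^{*})\cap\mathcal{N}(\Theta^{*})=\{0\}$ says nothing other than that $Q^{*}$ is injective on $\mathcal{R}(K^{*})$, and one must upgrade this to a genuine lower estimate by invoking the closedness of $\mathcal{R}(Q)$ (property (a)) together with the commutation $K^{*}Q^{*}=Q^{*}K^{*}$ from (d), so that $\mathcal{R}(K^{*})$ interacts well with the splitting $\mathcal{H}=\mathcal{R}(\Theta)\oplus\mathcal{N}(\Theta^{*})$. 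This is the same delicate point that underlies Theorem 2.1, and I would settle Theorem 2.4 by carrying out that argument verbatim for the operator $Q$; everything else is the bookkeeping of properties (a)--(d).
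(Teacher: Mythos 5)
Your proposal is correct and follows essentially the same route as the paper's proof: the paper likewise factors $\Theta=(\Theta\mathcal{T}^{\dagger})\mathcal{T}$ via $\mathcal{T}^{\dagger}\mathcal{T}=\Theta^{\dagger}\Theta$ (its operator $P$ is just your $Q$ restricted to $\mathcal{R}(\mathcal{T})$, shown invertible onto $\mathcal{R}(\Theta)$) and transports the two frame inequalities through the commutation hypothesis, with constants $C\left\|(P^{*})^{-1}\right\|^{-2}$ and $B\left\|P^{*}\right\|^{2}$ where you have $A\left\|(Q^{*})^{\dagger}\right\|^{-2}$ and $B_{1}\left\|\Theta\right\|^{2}$. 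The ``main obstacle'' you flag --- that $\mathcal{R}(K^{*})\cap\mathcal{N}(\Theta^{*})=\{0\}$ yields only injectivity of $Q^{*}$ on $\mathcal{R}(K^{*})$ rather than $K^{*}f\in\mathcal{R}(\Theta)$ --- is treated no more carefully in the paper itself, which simply asserts $\left\langle K^{*}P^{*}f,K^{*}P^{*}f\right\rangle\geq\left\|(P^{*})^{-1}\right\|^{-2}\left\langle K^{*}f,K^{*}f\right\rangle$ outright (exactly as in its proof of Theorem 2.1), so your write-up matches the paper's own argument and its level of rigor.
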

\begin{proof} Assume that $K \Theta \mathcal{T}^{\dagger}=\Theta \mathcal{T}^{\dagger} K$.  We define
$$
P :\mathcal{R}(\mathcal{T}) \longrightarrow \mathcal{R}(\Theta),
$$
by $P f=\Theta \mathcal{T}^{\dagger} f$, for all $ f \in \mathcal{R}(\mathcal{T})$. Since $\mathcal{N}(\mathcal{T})=\mathcal{N}(\Theta)$, we have $\mathcal{R}\left(\mathcal{T}^{\dagger}\right)=\mathcal{R}\left(\Theta^{\dagger}\right)$. Consequently by Lemma \textbf{1.7}, $\mathcal{N}(P)=\mathcal{N}\left(\Theta \mathcal{T}^{\dagger}\right)=\mathcal{N}\left(\mathcal{T} \mathcal{T}^{\dagger}\right)=(\mathcal{R}(\mathcal{T}))^{\perp}$, that implies 
$$
\mathcal{N}(P)=\mathcal{N}\left(\Theta \mathcal{T}^{\dagger}\right) \cap \mathcal{R}(\mathcal{T})=(\mathcal{R}(\mathcal{T}))^{\perp} \cap \mathcal{R}(\mathcal{T})=\{0\} .
$$
Hence, $P$ is invertible on $\mathcal{R}(\mathcal{T})$. By Lemma \textbf{1.7},$$ \mathcal{T}^{\dagger} \mathcal{T}=P_{\mathcal{R}\left(\mathcal{T}^{\dagger}\right)}=P_{\mathcal{R}\left(\Theta^{\dagger}\right)}=\Theta^{\dagger} \Theta .$$ Furthermore
\begin{equation}
P \mathcal{T}=\Theta \mathcal{T}^{\dagger} \mathcal{T}=\Theta \Theta^{\dagger} \Theta=\Theta.
\end{equation}
Let $C$, 
 be the lawer frame bound of $\left\{\Upsilon_\xi \mathcal{T}^*\right\}_{\xi \in \Omega}$, then for every $f \in$ $\mathcal{R}(\Theta)$, by (2.1), we obtain $$
\begin{aligned}
\int_{\Omega}\left\langle \Upsilon_\xi \Theta^* f,\Upsilon_\xi \Theta^* f \right\rangle \mathrm{d} \nu(\xi) & =\int_{\Omega}\left\langle\Upsilon_\xi \mathcal{T}^* P^* f,\Upsilon_\xi \mathcal{T}^* P^* f\right\rangle \mathrm{d} \nu(\xi)\\ & \geq C\left\langle K^* P^* f,K^* P^* f\right\rangle \\
& \geq C\left\| (P^*)^{-1}\right\|^{-2}\left\langle K^* f,K^* f \right\rangle .
\end{aligned}
$$
On the other hand, for every $f \in \mathcal{R}(\Theta)$,
$$
\int_{\Omega}\left\langle \Upsilon_\xi \Theta^* f,\Upsilon_\xi \Theta^* f \right\rangle \mathrm{d} \nu(\xi)=\int_{\Omega}\left\langle \Upsilon_\xi \mathcal{T}^*P^* f,\Upsilon_\xi \mathcal{T}^*P^* f \right\rangle \mathrm{d} \nu(\xi) \leq B\left\langle P^* f,P^* f \right\rangle =B\|P^*\|^2\left\langle  f, f\right\rangle .
$$
Which implies that $\left\{\Upsilon_\xi \Theta^*\right\}_{\xi \in \Omega}$ is a c-K-g-frame for $\mathcal{R}(\Theta)$.
\end{proof}
\begin{theorem}
Let $K \in \operatorname{End}_\mathcal{A}^*(\mathcal{H})$ and $\left\{\Upsilon_\xi \in \operatorname{End}_\mathcal{A}^*\left(\mathcal{H}, \mathcal{H}_\xi\right): \xi \in\right.\Omega\}$ be a c-g-Bessel sequence for $\mathcal{H}$, and $\mathcal{T}_{\Upsilon}$ be the synthesis operator of $\left\{\Upsilon_\xi \right\}_{\xi \in \Omega}$ with $\overline{\mathcal{R}\left(K^{*}\right)}$ is orthogonally
complemented. Then the following assertions are equivalent:
\begin{enumerate}
\item[(1)] $\mathcal{R}(K)=\mathcal{R}\left(\mathcal{T}_{\Upsilon}\right)$.
\item[(2)] There exist two constants $\lambda_1, \lambda_2>0$, such that for each $f \in \mathcal{H}$,
\begin{equation}
\frac{1}{\lambda_1}\left\langle K^* f,K^* f \right\rangle \leq \int_{\Omega}\left\langle \Upsilon_\xi f,\Upsilon_\xi f\right\rangle \mathrm{d} \nu(\xi) \leq \lambda_2\left\langle K^* f,K^* f \right\rangle .
\end{equation}
\item[(3)]$\left\{\Upsilon_\xi\right\}_{\xi \in \Omega}$ is a c-K-g-frame for $\mathcal{H}$ with respect to $\left\{\mathcal{\mathcal{H_\xi}}\right\}_{\xi \in \Omega}$ and there exists a c-g-Bessel sequence $\left\{\Phi_\xi\right\}_{\xi \in \Omega}$ for $\mathcal{H}$ with respect to $\left\{\mathcal{\mathcal{H_\xi}}\right\}_{\xi \in \Omega}$ such that $\Upsilon_\xi=\Phi_\xi K^*$ for each $\xi \in \Omega$.
\end{enumerate}

 \end{theorem}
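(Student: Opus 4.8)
The plan is to route everything through the analysis operator $\mathcal{T}_\Upsilon^*$. By Theorem 1.9 we have $\mathcal{T}_\Upsilon^*(f)(\xi)=\Upsilon_\xi f$, so by the definition of the inner product on $\bigoplus_{\xi\in\Omega}\mathcal{H}_\xi$,
\[
\int_\Omega \langle \Upsilon_\xi f, \Upsilon_\xi f\rangle\, \mathrm{d}\nu(\xi) = \langle \mathcal{T}_\Upsilon^* f, \mathcal{T}_\Upsilon^* f\rangle, \qquad f\in\mathcal{H}.
\]
This turns the middle term of condition (2) into $\langle \mathcal{T}_\Upsilon^* f,\mathcal{T}_\Upsilon^* f\rangle$, so that (2) reads $\tfrac{1}{\lambda_1}KK^*\le \mathcal{T}_\Upsilon\mathcal{T}_\Upsilon^*\le \lambda_2 KK^*$ as operator inequalities on $\mathcal{H}$. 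With this reformulation I would prove $(2)\Leftrightarrow(3)$ first, since it is the clean part, and then link in $(1)$ through Lemma 1.8, which lets one pass between operator inequalities, factorizations, and range inclusions.

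For $(2)\Rightarrow(3)$: the upper estimate $\mathcal{T}_\Upsilon\mathcal{T}_\Upsilon^*\le\lambda_2 KK^*$ is exactly the operator inequality in Lemma 1.8 with $\mathcal{T}=K$ and $\mathcal{T}'=\mathcal{T}_\Upsilon$; since $\overline{\mathcal{R}(K^*)}$ is orthogonally complemented by hypothesis, Lemma 1.8 yields $Q\in\operatorname{End}_\mathcal{A}^*$ with $\mathcal{T}_\Upsilon=KQ$. Putting $\Phi_\xi g:=(Q^*g)(\xi)$ (equivalently $\mathcal{T}_\Phi=Q$) defines a c-g-Bessel sequence with synthesis operator $Q$, and taking adjoints in $\mathcal{T}_\Upsilon=KQ$ gives $\mathcal{T}_\Upsilon^*=Q^*K^*$, i.e. $\Upsilon_\xi=\Phi_\xi K^*$. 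The lower estimate supplies the lower $c$-$K$-$g$-frame bound $1/\lambda_1$, while the upper $c$-$K$-$g$-Bessel bound follows from $\mathcal{T}_\Upsilon\mathcal{T}_\Upsilon^*\le\lambda_2 KK^*\le\lambda_2\|K\|^2 I$ together with Lemma 1.4. The converse $(3)\Rightarrow(2)$ is the easy direction: from $\Upsilon_\xi=\Phi_\xi K^*$ one gets $\mathcal{T}_\Upsilon=K\mathcal{T}_\Phi$, hence $\langle\mathcal{T}_\Upsilon^* f,\mathcal{T}_\Upsilon^* f\rangle=\langle\mathcal{T}_\Phi^* K^* f,\mathcal{T}_\Phi^* K^* f\rangle\le\|\mathcal{T}_\Phi\|^2\langle K^* f,K^* f\rangle$ by Lemma 1.4, which gives $\lambda_2=\|\mathcal{T}_\Phi\|^2$, and the $c$-$K$-$g$-frame lower bound gives $\lambda_1$.

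For $(1)\Leftrightarrow(2)$ I would read (2) as the conjunction $\mathcal{T}_\Upsilon\mathcal{T}_\Upsilon^*\le\lambda_2 KK^*$ and $KK^*\le\lambda_1\mathcal{T}_\Upsilon\mathcal{T}_\Upsilon^*$. By Lemma 1.8 the first is equivalent to $\mathcal{R}(\mathcal{T}_\Upsilon)\subseteq\mathcal{R}(K)$ and the second to $\mathcal{R}(K)\subseteq\mathcal{R}(\mathcal{T}_\Upsilon)$; together these say precisely $\mathcal{R}(K)=\mathcal{R}(\mathcal{T}_\Upsilon)$, which is (1). The half-step $\mathcal{R}(\mathcal{T}_\Upsilon)\subseteq\mathcal{R}(K)$ is unproblematic because it applies Lemma 1.8 with $\mathcal{T}=K$, for which $\overline{\mathcal{R}(K^*)}$ is complemented.

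The step I expect to be the real obstacle is the equivalence $KK^*\le\lambda_1\mathcal{T}_\Upsilon\mathcal{T}_\Upsilon^*\Leftrightarrow\mathcal{R}(K)\subseteq\mathcal{R}(\mathcal{T}_\Upsilon)$: here Lemma 1.8 is invoked with $\mathcal{T}=\mathcal{T}_\Upsilon$, which demands that $\overline{\mathcal{R}(\mathcal{T}_\Upsilon^*)}$ be orthogonally complemented, whereas the standing hypothesis only provides this for $\overline{\mathcal{R}(K^*)}$. So the crux is to secure the complementation of $\overline{\mathcal{R}(\mathcal{T}_\Upsilon^*)}$ — for instance by arguing that under (1)/(2) the common range $\mathcal{R}(K)=\mathcal{R}(\mathcal{T}_\Upsilon)$ is closed, whence $\mathcal{R}(\mathcal{T}_\Upsilon^*)$ is closed by Lemma 1.7 and hence orthogonally complemented — or else to carry this complementation as an additional standing assumption. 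Modulo this point, $(1)$, $(2)$ and $(3)$ close up into the asserted equivalence.
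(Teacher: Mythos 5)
Your proposal is correct and follows essentially the same route as the paper's proof: both pass to the operator picture via $\langle \mathcal{T}_{\Upsilon}^* f, \mathcal{T}_{\Upsilon}^* f\rangle = \int_{\Omega} \langle \Upsilon_\xi f, \Upsilon_\xi f\rangle \,\mathrm{d}\nu(\xi)$, so that (2) becomes $\frac{1}{\lambda_1} K K^* \leq \mathcal{T}_{\Upsilon}\mathcal{T}_{\Upsilon}^* \leq \lambda_2 K K^*$; both extract the sequence $\{\Phi_\xi\}_{\xi\in\Omega}$ from the Lemma 1.8 factorization $\mathcal{T}_{\Upsilon} = K\mathcal{Q}$ by setting $\Phi_\xi h = (\mathcal{Q}^* h)(\xi)$; and both trade operator inequalities for range inclusions through the same lemma. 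The only difference is organizational: you prove $(2)\Leftrightarrow(3)$ and $(1)\Leftrightarrow(2)$, whereas the paper runs the cycle $(1)\Rightarrow(2)\Rightarrow(3)\Rightarrow(1)$; your $(2)\Rightarrow(3)$ coincides with the paper's, and your $(3)\Rightarrow(2)$ is the first half of the paper's $(3)\Rightarrow(1)$.

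One remark, to your credit: the obstacle you single out is genuine, and the paper's own proof passes over it silently. In both its steps $(1)\Rightarrow(2)$ and $(3)\Rightarrow(1)$, the paper invokes Lemma 1.8 with $\mathcal{T}=\mathcal{T}_{\Upsilon}$ and $\mathcal{T}'=K$ (to pass between $KK^* \leq \lambda_1 \mathcal{T}_{\Upsilon}\mathcal{T}_{\Upsilon}^*$ and $\mathcal{R}(K)\subseteq\mathcal{R}(\mathcal{T}_{\Upsilon})$), which requires $\overline{\mathcal{R}(\mathcal{T}_{\Upsilon}^*)}$ to be orthogonally complemented, while the theorem's hypothesis only grants this for $\overline{\mathcal{R}(K^*)}$. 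Be aware, however, that the first repair you float does not work in general: neither (1) nor (2) forces the common range to be closed. For instance, let $\Omega$ be a single atom and $\Upsilon_{\xi_0}=K^*$ with $K$ of non-closed range; then $\mathcal{T}_{\Upsilon}=K$, so (1) and (2) hold trivially with $\lambda_1=\lambda_2=1$, yet $\mathcal{R}(\mathcal{T}_{\Upsilon})$ is not closed, so the closedness-plus-Lemma-1.7 route is unavailable. The honest fix is your fallback: carry the orthogonal complementation of $\overline{\mathcal{R}(\mathcal{T}_{\Upsilon}^*)}$ as an additional standing assumption (it is automatic in the Hilbert-space case, which is presumably why the gap went unnoticed). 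A last trivial point: in $(2)\Rightarrow(3)$ you need not rederive the upper bound for $\{\Upsilon_\xi\}_{\xi\in\Omega}$ from $\mathcal{T}_{\Upsilon}\mathcal{T}_{\Upsilon}^*\leq\lambda_2\|K\|^2 I$, since the c-g-Bessel property is already a standing hypothesis of the theorem — but the redundancy is harmless.
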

\begin{proof}
(1) Applying Lemma $1.8$, $$K K^* \leq \lambda_1\mathcal{T}_{\Upsilon} \mathcal{T}_{\Upsilon}^*, \quad and \quad \mathcal{T}_{\Upsilon} \mathcal{T}_{\Upsilon}^* \leq \lambda_2 K K^* \quad for \; some \; \lambda_1,\lambda_2>0. $$ Which implies that $$\frac{1}{\lambda_1} K K^* \leq \mathcal{T}_{\Upsilon} \mathcal{T}_{\Upsilon}^* \leq \lambda_2 K K^*.$$ Hence, for every $f \in \mathcal{H}$,
$$
\frac{1}{\lambda_1}\left\langle K^* f,K^* f \right\rangle \leq\left\langle \mathcal{T}_{\Upsilon}^* f,\mathcal{T}_{\Upsilon}^* f\right\rangle=\int_{\Omega}\left\langle \Upsilon_\xi f,\Upsilon_\xi f\right\rangle \mathrm{d} \nu(\xi) \leq \lambda_2 \left\langle K^* f,K^* f \right\rangle
$$
(2) $\Rightarrow$ (3) According to the assumed hypothesis we have $\mathcal{T}_{\Upsilon} \mathcal{T}_{\Upsilon}^* \leq \lambda_2 K K^*$. Applying Lemma \textbf{1.8}, there exists an operator $\mathcal{Q} \in \operatorname{End}_\mathcal{A}^*\left(\bigoplus_{\xi \in \Omega} \mathcal{K}_{\xi}, \mathcal{H}\right)$ such that $\mathcal{T}_{\Upsilon}=K \mathcal{Q}$, Which implies that $\mathcal{T}_{\Upsilon}^*=\mathcal{Q}^* K^*$. Now for any $h \in \mathcal{H}$ and for almost all $\xi \in \Omega$, we define:
$$
\Phi_\xi h=\left(\mathcal{Q}^* h\right)(\xi) .
$$
Consequently,
$$
\left\{\Upsilon_\xi(h)\right\}_{\xi \in \Omega}=\left\{\mathcal{T}_{\Upsilon}^* h\right\}_{\xi \in \Omega}=\left\{\left(\mathcal{Q}^*\left(K^* h\right)(\xi)\right\}_{\xi \in \Omega}=\left\{\Phi_\xi\left(K^* h\right)\right\}_{\xi \in \Omega},\right.
$$
Hence $\Upsilon_\xi=\Phi_\xi K^*$ for almost all $\xi \in \Omega$. Hence for every $h \in \mathcal{H}$, we achieve the intended result by:
$$
\int_{\Omega}\left\langle\Phi_\xi h,\Phi_\xi h \right\rangle \mathrm{d} \nu(\xi)=\int_{\Omega}\left\langle \left(\mathcal{Q}^* h\right)(\xi),\left(\mathcal{Q}^* h\right)(\xi)\right\rangle \mathrm{d} \nu(\xi) \leq\|\mathcal{Q}^*\|_2^2\left\langle h,h \right\rangle  .
$$
(3) $\Rightarrow$ (1) For every $f \in \mathcal{H}$,
$$
\frac{1}{\lambda_1}\left\langle K^* f,K^* f \right\rangle \leq \int_{\Omega}\left\langle \Upsilon_\xi f,\Upsilon_\xi f\right\rangle d \nu\left(\xi\right)=\int_{\Omega}\left\langle\Phi_\xi K^* f,\Phi_\xi K^* f\right\rangle \mathrm{d} \nu(\xi) \leq \lambda_{\Phi}\left\langle K^* f,K^* f \right\rangle .
$$
where $\lambda_{\Phi}$ the upper bound of $\left\{\Phi_\xi \right\}_{\xi \in \Omega}$.
 Hence $$\frac{1}{\lambda_1} K K^* \leq \mathcal{T}_{\Upsilon} \mathcal{T}_{\Upsilon}^* \leq \lambda_{\Phi} K K^*.$$ Hence $$\mathcal{R}(K)=\mathcal{R}\left(\mathcal{T}_{\Upsilon}\right).$$

\end{proof}
Given certain adjointable operators and some c-K-g-frames, the following theorem is used to construct c-K-g-frames in Hilbert $C^*$ modules. 
\begin{theorem}
Let $\left\{\Upsilon_\xi \in \operatorname{End}_\mathcal{A}^*\left(\mathcal{H}, \mathcal{H}_\xi\right): \xi \in\right.\Omega\}$ be a $c - K_1-g$ frame for $\mathcal{H}$ and $K_1, K_2 \in \operatorname{End}_\mathcal{A}^*(\mathcal{H})$ with $\overline{\mathcal{R}\left(K_{1}^{*}\right)}$ is orthogonally
complemented.
\begin{enumerate}
\item[(1)]If $\left\{\Upsilon_\xi \in \operatorname{End}_\mathcal{A}^*\left(\mathcal{H}, \mathcal{H}_\xi\right): \xi \in\right.\Omega\}$ is a $c- K_2-g$ frame for $\mathcal{H}$, then it is a $c - \left(K_1+K_2\right)-g$ frame for $\mathcal{H}$.
\item[(2)]If, in addition, $\left\{\Upsilon_\xi \in \operatorname{End}_\mathcal{A}^*\left(\mathcal{H}, \mathcal{H}_\xi\right): \xi \in\right.\Omega\}$ is $\delta$-tight $c-K_1-g$ frame, then it is a c- $K_2-g$-frame for $\mathcal{H}$ if and only if $\mathcal{R}\left(K_2\right) \subseteq \mathcal{R}\left(K_1\right)$.
\end{enumerate}
 \end{theorem}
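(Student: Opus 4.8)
The plan is to handle the two parts separately, using only the defining frame inequalities together with an elementary parallelogram estimate in part (1) and Lemma 1.8 in part (2). I denote by $A_1,B_1$ the bounds of the $c$-$K_1$-$g$ frame and, where relevant, by $A_2,B_2$ the bounds of the $c$-$K_2$-$g$ frame.

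For part (1), I first note the upper bound is immediate: the Bessel estimate $\int_\Omega \langle \Upsilon_\xi h,\Upsilon_\xi h\rangle\,d\nu(\xi)\le B_1\langle h,h\rangle$ does not involve $K_1$, so the same $B_1$ serves for the $c$-$(K_1+K_2)$-$g$ frame. For the lower bound I would write $(K_1+K_2)^*=K_1^*+K_2^*$ and apply the parallelogram inequality $\langle a+b,a+b\rangle\le 2\langle a,a\rangle+2\langle b,b\rangle$ (a consequence of $\langle a-b,a-b\rangle\ge 0$) with $a=K_1^*h$ and $b=K_2^*h$, obtaining $\langle (K_1+K_2)^*h,(K_1+K_2)^*h\rangle\le 2\langle K_1^*h,K_1^*h\rangle+2\langle K_2^*h,K_2^*h\rangle$. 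Substituting the two lower frame inequalities in the form $\langle K_i^*h,K_i^*h\rangle\le A_i^{-1}\int_\Omega \langle \Upsilon_\xi h,\Upsilon_\xi h\rangle\,d\nu(\xi)$ then produces the lower bound $A=A_1A_2/\big(2(A_1+A_2)\big)$. This part does not require the orthogonal-complement hypothesis.

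For part (2), I would convert everything to operator inequalities via $\langle K_i^*h,K_i^*h\rangle=\langle K_iK_i^*h,h\rangle$, and use that $\delta$-tightness means the exact identity $\int_\Omega \langle \Upsilon_\xi h,\Upsilon_\xi h\rangle\,d\nu(\xi)=\delta\langle K_1^*h,K_1^*h\rangle$. In the forward direction, a $c$-$K_2$-$g$ lower bound gives $A_2\langle K_2^*h,K_2^*h\rangle\le \delta\langle K_1^*h,K_1^*h\rangle$, which rearranges to $K_2K_2^*\le(\delta/A_2)K_1K_1^*$; applying Lemma 1.8 with $\mathcal{T}=K_1$ and $\mathcal{T}'=K_2$ (whose hypothesis $\overline{\mathcal{R}(K_1^*)}$ orthogonally complemented is exactly the one assumed), the implication (1)$\Rightarrow$(4) yields $\mathcal{R}(K_2)\subseteq\mathcal{R}(K_1)$. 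Conversely, from $\mathcal{R}(K_2)\subseteq\mathcal{R}(K_1)$ the implication (4)$\Rightarrow$(1) supplies $\lambda>0$ with $K_2K_2^*\le\lambda K_1K_1^*$, hence $\langle K_2^*h,K_2^*h\rangle\le\lambda\langle K_1^*h,K_1^*h\rangle$; combining with the tight identity gives the lower bound $\int_\Omega \langle \Upsilon_\xi h,\Upsilon_\xi h\rangle\,d\nu(\xi)\ge(\delta/\lambda)\langle K_2^*h,K_2^*h\rangle$, while the upper bound follows from $\int_\Omega \langle \Upsilon_\xi h,\Upsilon_\xi h\rangle\,d\nu(\xi)=\delta\langle K_1^*h,K_1^*h\rangle\le\delta\|K_1\|^2\langle h,h\rangle$.

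The main obstacle I anticipate is the careful bookkeeping in part (2): correctly matching $\mathcal{T},\mathcal{T}'$ in Lemma 1.8 to $K_1,K_2$ and checking that the orthogonal-complement hypothesis is placed on $\overline{\mathcal{R}(K_1^*)}$, the range of the adjoint of the operator in the role of $\mathcal{T}$, so that the equivalence between $K_2K_2^*\le\lambda K_1K_1^*$ and $\mathcal{R}(K_2)\subseteq\mathcal{R}(K_1)$ is legitimately available in both directions. Once this reformulation is in place the analytic content is light and both implications follow directly.
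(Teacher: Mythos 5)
Your proof is correct, and on part (2) it coincides with the paper's own argument step for step: both directions apply Lemma 1.8 with $\mathcal{T}=K_1$, $\mathcal{T}'=K_2$ (the forward direction passing from $A_2\langle K_2^*h,K_2^*h\rangle\le\delta\langle K_1^*h,K_1^*h\rangle$ to $K_2K_2^*\le(\delta/A_2)K_1K_1^*$ and then to $\mathcal{R}(K_2)\subseteq\mathcal{R}(K_1)$, the converse extracting $\lambda>0$ with $K_2K_2^*\le\lambda K_1K_1^*$ and the lower bound $\delta/\lambda$), and your placement of the orthogonal-complementation hypothesis on $\overline{\mathcal{R}(K_1^*)}$ matches the paper exactly; your reading of $\delta$-tightness as the exact identity is also the one the paper itself uses. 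Part (1) is where you genuinely diverge, and your route is the sounder one. The paper halves and adds the two lower frame inequalities and then tries to control the cross term via the scalar norm estimates $\|K_1^*f\|^2=\|(K_1+K_2)^*f-K_2^*f\|^2\ge\|(K_1+K_2)^*f\|^2-\|K_2^*f\|^2$ and $\|(K_1+K_2)^*f\|^2\le\|K_1^*f\|^2+\|K_2^*f\|^2$; both inequalities are false in general (take $K_1=K_2=I_{\mathcal{H}}$, which makes the second read $4\|f\|^2\le2\|f\|^2$), and even if they held, norm inequalities would not yield the required $\mathcal{A}$-valued inequality $\lambda_1\langle(K_1+K_2)^*f,(K_1+K_2)^*f\rangle\le\int_\Omega\langle\Upsilon_\xi f,\Upsilon_\xi f\rangle\,\mathrm{d}\nu(\xi)$, since $\|x\|\le\|y\|$ does not imply $\langle x,x\rangle\le\langle y,y\rangle$ in a Hilbert $C^*$-module. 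Your module-level parallelogram estimate $\langle a+b,a+b\rangle\le2\langle a,a\rangle+2\langle b,b\rangle$, derived from $\langle a-b,a-b\rangle\ge0$, works directly with the $\mathcal{A}$-valued inner products and yields the legitimate lower bound $A_1A_2/\bigl(2(A_1+A_2)\bigr)$ (the cost being a factor of $2$ relative to the constant the paper asserts, which is the price of correctness). Your remark that part (1) needs no orthogonal-complementation hypothesis is also accurate: the paper invokes that hypothesis only through Lemma 1.8, which enters only in part (2).
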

\begin{proof}
 (1) Assume that $\left\{\Upsilon_\xi\right\}_{\xi \in \Omega}$ is a $c$ - $K_1-g$-frame and also $c-K_2-g$ frame for $\mathcal{H}$ with respect to $\left\{\mathcal{\mathcal{H_\xi}}\right\}_{\xi \in \Omega}$, then there exist $A_1,A_2,B_1,B_2>0$ constants such that for every $f \in \mathcal{H}$, 
$$
A_1\left\langle K_1^* f,K_1^* f \right\rangle \leq \int_{\Omega}\left\langle \Upsilon_\xi f,\Upsilon_\xi f\right\rangle \mathrm{d} \nu(\xi)\leq B_1\left\langle  f,f\right\rangle.
$$
And
$$
A_2\left\langle K_2^* f,K_2^* f \right\rangle \leq \int_{\Omega}\left\langle \Upsilon_\xi f,\Upsilon_\xi f\right\rangle \mathrm{d} \nu(\xi)\leq B_2\left\langle  f,f\right\rangle.
$$
It follow that
$$
\left(\frac{A_1}{2}\left\langle K_1^* f,K_1^* f \right\rangle+\frac{A_1}{2}\left\langle K_2^* f,K_2^* f \right\rangle\right) \leq \int_{\Omega}\left\langle \Upsilon_\xi f,\Upsilon_\xi f\right\rangle \mathrm{d} \nu(\xi)\leq \left(\frac{B_1}{2}\left\langle  f,f\right\rangle + \frac{B_2}{2}\left\langle  f,f\right\rangle \right).
$$
we pose $\lambda_1=\min \left\{\frac{A_1}{2}, \frac{A_2}{2}\right\}$ and $\lambda_2=\max \left\{\frac{B_1}{2}, \frac{B_2}{2}\right\}$, since
$$
\begin{aligned}
\left\|K_1^* f\right\|^2 & =\left\|K_1^* f\right\|^2=\left\|\left( K_1^*+ K_2^*\right) f- K_2^* f\right\|^2=\left\|\left( K_1+ K_2\right)^* f- K_2^* f\right\|^2 \\
& \geq \left\|\left( K_1+ K_2\right)^* f\right\|^2-\left\| K_2^* f\right\|^2.
\end{aligned}
$$
We have
$$
\left\|\left( K_1+ K_2\right)^* f\right\|^2 \leq \left\|K_1^* f\right\|^2+\left\|K_2^* f\right\|^2.
$$
Hence
$$
\lambda_1\left\langle\left(K_1+K_2\right)^* f,\left(K_1+K_2\right)^* f\right\rangle \leq \int_{\Omega}\left\langle \Upsilon_\xi f,\Upsilon_\xi f\right\rangle \mathrm{d} \nu(\xi)\leq \lambda_2 \left\langle  f,f\right\rangle.
$$ 
(2) For every $f \in \mathcal{H}$,
 \begin{equation}
 \delta\left\langle K_1^* f,K_1^* f \right\rangle=\int_{\Omega}\left\langle \Upsilon_\xi f,\Upsilon_\xi f\right\rangle \mathrm{d} \nu(\xi).
 \end{equation}
On the other hand, we have $\left\{\Upsilon_\xi \in \operatorname{End}_\mathcal{A}^*\left(\mathcal{H}, \mathcal{H}_\xi\right): \xi \in\right.\Omega\}$ is $c-K_2-g$-frame, then there exists a $A>0$, such that 
\begin{equation}
\int_{\Omega}\left\langle \Upsilon_\xi f,\Upsilon_\xi f\right\rangle \mathrm{d} \nu(\xi) \geq A\left\langle K_2^* f,K_2^* f \right\rangle 
\end{equation}
Hence, from (2.3) and (2.4), $$K_2 K_2^* \leq \frac{\delta}{A} K_1 K_1^*.$$ Applying Lemma \textbf{1.8} we get $\mathcal{R}\left(K_2\right) \subseteq \mathcal{R}\left(K_1\right)$.\\ Now
to show the opposite inclusion, simply use the lemma \textbf{1.8}, there exists $\gamma>0$, such that $K_2 K_2^* \leq \gamma K_1 K_1^*$. Hence for each $f \in \mathcal{H}$, we have
$$
\left\langle K_2^* f,K_2^* f \right\rangle \leq \gamma\left\langle K_1^* f,K_1^* f \right\rangle=\frac{\gamma}{\delta} \int_{\Omega}\left\langle \Upsilon_\xi f,\Upsilon_\xi f\right\rangle \mathrm{d} \nu(\xi),
$$
Therefore
$$
\frac{\delta}{\gamma}\left\langle K_2^* f,K_2^* f \right\rangle \leq \int_{\Omega}\left\langle \Upsilon_\xi f,\Upsilon_\xi f\right\rangle \mathrm{d} \nu(\xi) .
$$
this completes the proof
\end{proof}
\section{Sum of c-K-g-frames in Hilbert $C^*$ modules}
In this part, we investigate the sum of these frames under the assumption that $\left\{\Upsilon_\xi\right\}_{\xi \in \Omega}$ and $\left\{\Phi_\xi\right\}_{\xi \in \Omega}$ are arbitrary.
\begin{theorem}
Let $K_1, K_2 \in \operatorname{End}_\mathcal{A}^*(\mathcal{H})$ have closed ranges, $\left\{\Upsilon_\xi\right\}_{\xi \in \Omega}$ and $\left\{\Phi_\xi\right\}_{\xi \in \Omega}$ are $c- K_1-g$ frame and $c-g$ Bessel sequence for $\mathcal{H}$ with respect to $\left\{\mathcal{\mathcal{H_\xi}}\right\}_{\xi \in \Omega}$, respectively.\begin{enumerate}
\item[(i)] If $K_1 \geq 0$ and $\left\{\Phi_\xi\right\}_{\xi \in \Omega}$ is a $c- K_{1}-g$ dual for $\left\{\Upsilon_\xi\right\}_{\xi \in \Omega}$, then the sequence $\left\{\Upsilon_\xi+\Phi_\xi\right\}_{\xi \in \Omega}$ is a c- $K_1-g$-frame for $\mathcal{H}$ with respect to $\left\{\mathcal{\mathcal{H_\xi}}\right\}_{\xi \in \Omega}$.
\item[(ii)] If $\left\{\Phi_\xi\right\}_{\xi \in \Omega}$ is c-$\left(K_1+K_2\right)-g$-frame for $\mathcal{H}$ with respect to $\left\{\mathcal{\mathcal{H_\xi}}\right\}_{\xi \in \Omega}$ and $\mathcal{T}_{\Upsilon} \mathcal{T}_{\Phi}^*=0$, then $\left\{\Upsilon_\xi+\Phi_\xi\right\}_{\xi \in \Omega}$ is a c- $\left(K_1+K_2\right)-g$-frame for $\mathcal{H}$ with respect to $\left\{\mathcal{\mathcal{H_\xi}}\right\}_{\xi \in \Omega}$.
\end{enumerate}
 \end{theorem}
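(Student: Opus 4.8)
The plan is to translate the two integral frame inequalities into operator inequalities for the synthesis operators, where the additive structure of the sum family becomes transparent. The first step is to record that the synthesis operator is additive in the generating family: writing $\mathcal{T}_{\Upsilon}, \mathcal{T}_{\Phi}$ for the synthesis operators of $\{\Upsilon_\xi\}$ and $\{\Phi_\xi\}$ as in Theorem 1.9, the operator $\mathcal{T}_{\Upsilon}+\mathcal{T}_{\Phi}$ is bounded (both families are Bessel), and a direct check against the weak definition identifies it as the synthesis operator $\mathcal{T}_{\Upsilon+\Phi}$ of $\{\Upsilon_\xi+\Phi_\xi\}$, so this sum family is automatically Bessel and $\mathcal{T}_{\Upsilon+\Phi}^{*}h=\{(\Upsilon_\xi+\Phi_\xi)h\}_{\xi\in\Omega}$. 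Consequently, for every $h\in\mathcal{H}$,
$$\int_{\Omega}\left\langle(\Upsilon_\xi+\Phi_\xi)h,(\Upsilon_\xi+\Phi_\xi)h\right\rangle \mathrm{d}\nu(\xi)=\left\langle \mathcal{T}_{\Upsilon+\Phi}\mathcal{T}_{\Upsilon+\Phi}^{*}h,h\right\rangle,$$
and expanding $\mathcal{T}_{\Upsilon+\Phi}\mathcal{T}_{\Upsilon+\Phi}^{*}=\mathcal{T}_{\Upsilon}\mathcal{T}_{\Upsilon}^{*}+\mathcal{T}_{\Phi}\mathcal{T}_{\Phi}^{*}+\mathcal{T}_{\Upsilon}\mathcal{T}_{\Phi}^{*}+\mathcal{T}_{\Phi}\mathcal{T}_{\Upsilon}^{*}$ reduces both parts to controlling the two cross terms.

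For part (ii), the hypothesis $\mathcal{T}_{\Upsilon}\mathcal{T}_{\Phi}^{*}=0$ together with its adjoint $\mathcal{T}_{\Phi}\mathcal{T}_{\Upsilon}^{*}=(\mathcal{T}_{\Upsilon}\mathcal{T}_{\Phi}^{*})^{*}=0$ annihilates both cross terms, so the displayed integral equals $\int_{\Omega}\langle\Upsilon_\xi h,\Upsilon_\xi h\rangle \mathrm{d}\nu(\xi)+\int_{\Omega}\langle\Phi_\xi h,\Phi_\xi h\rangle \mathrm{d}\nu(\xi)$. The upper bound then follows by adding the Bessel bounds $B_1$ and $B_\Phi$ of the two families, while the lower bound follows by discarding the positive $\Upsilon$-term and invoking the lower $c$-$(K_1+K_2)$-$g$-frame inequality for $\{\Phi_\xi\}$; this exhibits $\{\Upsilon_\xi+\Phi_\xi\}$ as a $c$-$(K_1+K_2)$-$g$-frame with bounds $(A_\Phi,\,B_1+B_\Phi)$.

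For part (i) the point is to evaluate the cross terms via the dual relation. Unwinding Definition 1.11 against the weak definition of the synthesis operator shows that $\{\Phi_\xi\}$ being a $c$-$K_1$-$g$-dual of $\{\Upsilon_\xi\}$ is precisely the identity $\mathcal{T}_{\Upsilon}\mathcal{T}_{\Phi}^{*}=K_1$; taking adjoints and using $K_1=K_1^{*}$ (valid since $K_1\geq 0$) gives $\mathcal{T}_{\Phi}\mathcal{T}_{\Upsilon}^{*}=K_1$ as well, so the two cross terms sum to $2K_1$. Hence
$$\mathcal{T}_{\Upsilon+\Phi}\mathcal{T}_{\Upsilon+\Phi}^{*}=\mathcal{T}_{\Upsilon}\mathcal{T}_{\Upsilon}^{*}+\mathcal{T}_{\Phi}\mathcal{T}_{\Phi}^{*}+2K_1.$$
Since $\mathcal{T}_{\Phi}\mathcal{T}_{\Phi}^{*}\geq 0$ and $2K_1\geq 0$, and since the $c$-$K_1$-$g$-frame lower bound for $\{\Upsilon_\xi\}$ reads $A_1 K_1K_1^{*}\leq \mathcal{T}_{\Upsilon}\mathcal{T}_{\Upsilon}^{*}$, we obtain $\mathcal{T}_{\Upsilon+\Phi}\mathcal{T}_{\Upsilon+\Phi}^{*}\geq A_1 K_1K_1^{*}$, which is the desired lower bound with constant $A_1$. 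The upper bound again comes from $\|\mathcal{T}_{\Upsilon+\Phi}\|\leq\|\mathcal{T}_{\Upsilon}\|+\|\mathcal{T}_{\Phi}\|$ combined with Lemma 1.4.

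The main obstacle is the correct bookkeeping of the cross terms $\mathcal{T}_{\Upsilon}\mathcal{T}_{\Phi}^{*}+\mathcal{T}_{\Phi}\mathcal{T}_{\Upsilon}^{*}$: in part (i) positivity of $K_1$ is used in an essential way, since if $K_1$ were merely self-adjoint the term $2K_1$ could fail to be positive and the lower estimate would collapse, while in part (ii) one must check that $\mathcal{T}_{\Upsilon}\mathcal{T}_{\Phi}^{*}=0$ forces the adjoint cross term to vanish as well. The remaining ingredients—additivity of the synthesis operators and the two upper bounds via the triangle inequality for the operator norm—are routine, and the closedness of the ranges of $K_1,K_2$ is only needed to keep the ambient $K$-$g$-frame machinery consistent, not in the core estimates.
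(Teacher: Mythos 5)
Your proof is correct, and its core is the same decomposition the paper itself uses: expand the frame operator of the sum family as $S_{\Upsilon}+S_{\Phi}+(\text{cross terms})$, note in (i) that the dual relation makes the cross terms equal $K_1+K_1^{*}=2K_1\geq 0$ (your identity $\mathcal{T}_{\Upsilon}\mathcal{T}_{\Phi}^{*}=K_1$ is exactly the paper's weak computation $\overline{\left\langle K_1 h, f\right\rangle}=\int_{\Omega}\left\langle \Phi_\xi^{*}\Upsilon_\xi f,h\right\rangle \mathrm{d}\nu(\xi)$, just packaged at the level of synthesis operators), and in (ii) that $\mathcal{T}_{\Upsilon}\mathcal{T}_{\Phi}^{*}=0$ and its adjoint kill both cross terms. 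Where you diverge, you actually improve on the paper's own argument in two places. First, for the Bessel bound in (i) the paper asserts $\int_{\Omega}\left\langle(\Upsilon_\xi+\Phi_\xi)f,(\Upsilon_\xi+\Phi_\xi)f\right\rangle \mathrm{d}\nu(\xi)\leq \int_{\Omega}\left\langle\Upsilon_\xi f,\Upsilon_\xi f\right\rangle \mathrm{d}\nu(\xi)+\int_{\Omega}\left\langle\Phi_\xi f,\Phi_\xi f\right\rangle \mathrm{d}\nu(\xi)$, which is false precisely in the situation at hand, since the discarded cross term equals $2\left\langle K_1 f,f\right\rangle\geq 0$; your triangle-inequality bound $\|\mathcal{T}_{\Upsilon+\Phi}\|\leq\sqrt{B_1}+\sqrt{B_2}$, giving the constant $(\sqrt{B_1}+\sqrt{B_2})^{2}$, is the correct repair. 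Second, in (ii) the paper's lower estimate invokes $A_2\left\langle K_2^{*}f,K_2^{*}f\right\rangle$, silently treating $\{\Phi_\xi\}$ as a c-$K_2$-g-frame rather than the stated c-$(K_1+K_2)$-g-frame, and its combination step $A_1\left\langle K_1^{*}f,K_1^{*}f\right\rangle+A_2\left\langle K_2^{*}f,K_2^{*}f\right\rangle\geq\lambda\left\langle(K_1+K_2)^{*}f,(K_1+K_2)^{*}f\right\rangle$ also drops the factor $\tfrac12$ forced by $\left\langle a+b,a+b\right\rangle\leq 2(\left\langle a,a\right\rangle+\left\langle b,b\right\rangle)$; your route --- discard the positive $\Upsilon$-term and apply the c-$(K_1+K_2)$ lower bound of $\{\Phi_\xi\}$ directly --- proves the theorem exactly as stated, with lower bound $A_{\Phi}$, and incidentally shows that under this literal reading the frame property of $\{\Upsilon_\xi\}$ is only needed for the upper estimate. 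One small point worth making explicit when you write this up: passing from $\left\langle(\mathcal{T}_{\Upsilon}\mathcal{T}_{\Upsilon}^{*}-A_1K_1K_1^{*})h,h\right\rangle\geq 0$ for all $h$ to the operator inequality $A_1K_1K_1^{*}\leq\mathcal{T}_{\Upsilon}\mathcal{T}_{\Upsilon}^{*}$ uses the standard fact that a self-adjoint adjointable operator on a Hilbert $C^{*}$-module with $\left\langle Th,h\right\rangle\geq 0$ for all $h$ is positive; your closing observations that positivity (not mere self-adjointness) of $K_1$ is essential and that the closed-range hypotheses on $K_1,K_2$ are never used in the estimates both match what the paper's proof implicitly does.
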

\begin{proof}
(i)  For every $f \in \mathcal{H}$, we have
$$
\begin{aligned}
\left\langle K_1^* f, h\right\rangle & =\left\langle f, K_1 h\right\rangle\\ &=\overline{\left\langle K_1 h, f\right\rangle}
\end{aligned}
$$
Since $\left\{\Phi_\xi\right\}_{\xi \in \Omega}$ is a $c-K_1-g$-dual of $\left\{\Upsilon_\xi\right\}_{\xi \in \Omega}$ 
$$
\begin{aligned}
\overline{\left\langle K_1 h, f\right\rangle}&=\overline{\int_{\Omega}\left\langle\Upsilon_\xi^* \Phi_\xi h, f\right\rangle \mathrm{d} \nu(\xi)} \\
& =\int_{\Omega}\left\langle\Phi_\xi^* \Upsilon_\xi f, h\right\rangle \mathrm{d} \nu(\xi) .
\end{aligned}
$$
We denote by $S_{\Upsilon+\Phi}$, the $c-g$ frame operator of $\left\{\Upsilon_\xi+\Phi_\xi\right\}_{\xi \in \Omega}$. Consequently for every $f, h \in \mathcal{H}$,
$$
\begin{aligned}
\left\langle S_{\Upsilon+\Phi} f, h\right\rangle & =\int_{\Omega}\left\langle f,\left(\Upsilon_\xi+\Phi_\xi\right)^*\left(\Upsilon_\xi+\Phi_\xi\right) h\right\rangle \mathrm{d} \nu(\xi) \\
& =\int_{\Omega}\left\langle\left(\Upsilon_\xi+\Phi_\xi\right)^*\left(\Upsilon_\xi+\Phi_\xi\right) f, h\right\rangle \mathrm{d} \nu(\xi) \\
& =\int_{\Omega}\left\langle\Upsilon_\xi^* \Upsilon_\xi f, h\right\rangle \mathrm{d} \nu(\xi)+\int_{\Omega}\left\langle\Phi_\xi^* \Phi_\xi f, h\right\rangle \mathrm{d} \nu(\xi) \\
& +\int_{\Omega}\left\langle\Upsilon_\xi^* \Phi_\xi f, h\right\rangle \mathrm{d} \nu(\xi)+\int_{\Omega}\left\langle\Phi_\xi^* \Upsilon_\xi f, h\right\rangle \mathrm{d} \nu(\xi) \\
& =\left\langle S_{\Upsilon} f, h\right\rangle+\left\langle S_{\Phi} f, h\right\rangle+\left\langle K_1 f, h\right\rangle+\left\langle K_1^* f, h\right\rangle.
\end{aligned}
$$
Therefore
$$
\begin{aligned}
\left\langle S_{\Upsilon+\Phi} f, f\right\rangle & =\int_{\Omega}\left\langle\left(\Upsilon_\xi+\Phi_\xi\right)^*\left(\Upsilon_\xi+\Phi_\xi\right) f, f\right\rangle \mathrm{d} \nu(\xi) \\ &\geq\left\langle S_{\Upsilon} f, f\right\rangle \\
& =\int_{\Omega}\left\langle \Upsilon_\xi f,\Upsilon_\xi f\right\rangle \mathrm{d} \nu(\xi) \\ &\geq C_{\Upsilon}\left\langle K_1^* f,K_1^* f \right\rangle .
\end{aligned}
$$
This prove that $\left\{\Upsilon_\xi+\Phi_\xi\right\}_{\xi \in \Omega}$ has the lower frame bound.
Now, we prove $\left\{\Upsilon_\xi+\Phi_\xi\right\}_{\xi \in \Omega}$ is a $c$-g-Bessel sequence. For every $f \in \mathcal{H}$,
$$
\begin{aligned}
\int_{\Omega}\left\langle\left(\Upsilon_\xi+\Phi_\xi\right) f,\left(\Upsilon_\xi+\Phi_\xi\right) f\right\rangle  \mathrm{d} \nu(\xi) & \leq  \int_{\Omega}\left\langle \Upsilon_\xi f,\Upsilon_\xi f\right\rangle \mathrm{d} \nu(\xi)+ \int_{\Omega}\left\langle \Phi_\xi f,\Phi_\xi f \right\rangle \mathrm{d} \nu(\xi) \\
& \leq  B_1\left\langle  f, f\right\rangle+ B_2\left\langle  f, f\right\rangle=\left(B_1+B_2\right)\left\langle  f, f\right\rangle .
\end{aligned}
$$(ii)  For every $f \in \mathcal{H}$, Since $\mathcal{T}_{\Upsilon} \mathcal{T}_{\Phi}^*=0$, we have $\int_{\Omega}\left\langle\Lambda_\xi^* \Phi_\xi f, f\right\rangle \mathrm{d} \nu(\xi)=0$ and
$$
\begin{aligned}
\int_{\Omega}\left\langle \left(\Upsilon_\xi+\Phi_\xi\right) f,\left(\Upsilon_\xi+\Phi_\xi\right) f \right\rangle \mathrm{d} \nu(\xi) & =\int_{\Omega}\left\langle \Upsilon_\xi f,\Upsilon_\xi f\right\rangle \mathrm{d} \nu(\xi)+\int_{\Omega}\left\langle \Phi_\xi f,\Phi_\xi f \right\rangle \mathrm{d} \nu(\xi) \\
& \geq A_1\left\langle K_1^* f,K_1^* f \right\rangle+A_2\left\langle K_2^* f,K_2^* f \right\rangle\\ &\geq \lambda\left\langle\left(K_1+K_2\right)^* f,\left(K_1+K_2\right)^* f\right\rangle
\end{aligned}
$$
where $\lambda=\min \left\{A_1, A_2\right\}$.   
\end{proof}
\begin{theorem}
Let $K_1 \in \operatorname{End}_\mathcal{A}^*(\mathcal{H}), K_2 \in \operatorname{End}_\mathcal{A}^*(\mathcal{K})$ and $\Upsilon=$ $\left\{\Upsilon_\xi\right\}_{\xi \in \Omega}$ is a c-K $K_1$-g-frame and $\left\{\Phi_\xi\right\}_{\xi \in \Omega}$ is a c-g-Bessel sequence for $\mathcal{H}$. Assume that $\Theta_1, \Theta_2 \in \operatorname{End}_\mathcal{A}^*(\mathcal{H},\mathcal{K})$ and $\Theta_1 \mathcal{T}_{\Upsilon} \mathcal{T}_{\Phi}^* \Theta_2^*+\Theta_2 \mathcal{T}_{\Phi} \mathcal{T}_{\Upsilon}^* \Theta_1^*+$ $\Theta_2 S_{\Phi} \Theta_2^* \geq 0$. If $\Theta_1$ has closed range with $\Theta_1 K_1=K_2 \Theta_1$ and $\mathcal{R}\left(K_2^*\right) \cap$ $\mathcal{N}\left(\Theta_1^*\right)=\{0\}$, then $\left\{\Upsilon_\xi \Theta_1^*+\Phi_\xi \Theta_2^*\right\}_{\xi \in \Omega}$ is a c-$K_{2}$-g-frame for $\mathcal{K}$ with respect to $\left\{\mathcal{\mathcal{H_\xi}}\right\}_{\xi \in \Omega}$.

 \end{theorem}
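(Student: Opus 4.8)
The plan is to analyze the frame operator $S_{\Psi}$ of the candidate sequence $\Psi_\xi := \Upsilon_\xi\Theta_1^* + \Phi_\xi\Theta_2^*$ and to sandwich it between a multiple of $K_2 K_2^*$ (lower) and a multiple of $I_{\mathcal{K}}$ (upper). First I would secure the Bessel (upper) bound, which also guarantees that $S_{\Psi}$ is well defined: expanding $\langle \Psi_\xi f, \Psi_\xi f\rangle$ and using the elementary inequality $\langle a+b, a+b\rangle \le 2\langle a,a\rangle + 2\langle b,b\rangle$ reduces matters to the Bessel bounds $B_1,B_2$ of $\{\Upsilon_\xi\}$ and $\{\Phi_\xi\}$, after which Lemma 1.4 applied to $\Theta_1^*,\Theta_2^*$ yields
$$\int_{\Omega}\left\langle \Psi_\xi f, \Psi_\xi f\right\rangle\,\mathrm{d}\nu(\xi) \le \bigl(2B_1\|\Theta_1\|^2 + 2B_2\|\Theta_2\|^2\bigr)\left\langle f, f\right\rangle, \qquad f\in\mathcal{K}.$$

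Next I would compute $S_{\Psi}$ explicitly. Expanding the four quadratic terms of $\langle \Psi_\xi f, \Psi_\xi f\rangle$ and recognizing the two cross terms through the analysis operators of Theorem 1.9 (namely $\mathcal{T}_\Upsilon^*(\Theta_1^* f)(\xi) = \Upsilon_\xi\Theta_1^* f$ and $\mathcal{T}_\Phi^*(\Theta_2^* f)(\xi) = \Phi_\xi\Theta_2^* f$, whose $\bigoplus_\xi\mathcal{H}_\xi$-inner product collapses the two integrals) gives
$$S_{\Psi} = \Theta_1 S_{\Upsilon}\Theta_1^* + \Theta_2 S_{\Phi}\Theta_2^* + \Theta_1\mathcal{T}_\Upsilon\mathcal{T}_\Phi^*\Theta_2^* + \Theta_2\mathcal{T}_\Phi\mathcal{T}_\Upsilon^*\Theta_1^*.$$
The standing hypothesis asserts precisely that the last three summands together are $\ge 0$, hence $S_{\Psi} \ge \Theta_1 S_{\Upsilon}\Theta_1^*$; this is the step that renders the perturbation $\Phi_\xi\Theta_2^*$ harmless for the lower bound.

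For the lower bound I would invoke that $\{\Upsilon_\xi\}$ is a c-$K_1$-g-frame, so $S_{\Upsilon} \ge A_1 K_1 K_1^*$ by Definition 1.10, giving $S_{\Psi} \ge A_1\,\Theta_1 K_1 K_1^*\Theta_1^*$. The commutation $\Theta_1 K_1 = K_2\Theta_1$, with adjoint $K_1^*\Theta_1^* = \Theta_1^* K_2^*$, converts this into $A_1\,K_2\Theta_1\Theta_1^* K_2^*$, so that $\langle S_{\Psi} f, f\rangle \ge A_1\langle \Theta_1^* K_2^* f, \Theta_1^* K_2^* f\rangle$. It then remains to recover $\langle K_2^* f, K_2^* f\rangle$. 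Mirroring the argument of Theorem 2.1: since $\Theta_1$ has closed range and $\mathcal{R}(K_2^*)\cap\mathcal{N}(\Theta_1^*) = \{0\}$, the vector $g = K_2^* f$ lies in $\mathcal{R}(\Theta_1)$, so by Lemma 1.7 the projection $\Theta_1\Theta_1^\dagger=(\Theta_1^*)^\dagger\Theta_1^*$ fixes it, $g = (\Theta_1^*)^\dagger\Theta_1^* g$, and Lemma 1.4 gives $\langle g,g\rangle \le \|(\Theta_1^*)^\dagger\|^2\langle \Theta_1^* g, \Theta_1^* g\rangle$. Rearranging produces the lower c-$K_2$-g-frame bound $A_1\|(\Theta_1^*)^\dagger\|^{-2}$.

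The main obstacle is exactly this last reduction: justifying that the closed-range and disjointness hypotheses force $\mathcal{R}(K_2^*)\subseteq\mathcal{R}(\Theta_1)$, so that $\Theta_1\Theta_1^\dagger$ acts as the identity on $K_2^* f$. Everything else — the frame-operator identity and the upper estimate — is routine bookkeeping of the four terms, but one must establish the Bessel property before manipulating $S_{\Psi}$, since the analysis-operator identities of Theorem 1.9 presuppose that $\{\Psi_\xi\}_{\xi\in\Omega}$ is a c-g-Bessel sequence.
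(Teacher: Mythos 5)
Your proposal follows essentially the same route as the paper's proof: expand the four quadratic terms, absorb $\Theta_1\mathcal{T}_\Upsilon\mathcal{T}_\Phi^*\Theta_2^*+\Theta_2\mathcal{T}_\Phi\mathcal{T}_\Upsilon^*\Theta_1^*+\Theta_2 S_\Phi\Theta_2^*$ via the positivity hypothesis, apply the lower $c$-$K_1$-$g$-frame bound together with $K_1^*\Theta_1^*=\Theta_1^*K_2^*$, and finish with the Theorem 2.1 Moore--Penrose projection argument to obtain the lower bound $\alpha_1\|(\Theta_1^*)^\dagger\|^{-2}$ (the paper's $\|\Theta_1^\dagger\|^{-2}$, the same constant). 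Your only deviation is cosmetic: for the Bessel bound you use $\langle a+b,a+b\rangle\le 2\langle a,a\rangle+2\langle b,b\rangle$, yielding $2\beta_1\|\Theta_1\|^2+2\beta_2\|\Theta_2\|^2$, which is in fact more carefully justified than the paper's stated constant $\beta_1\|\Theta_1^*\|^2+\beta_2\|\Theta_2^*\|^2$.
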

\begin{proof}
Assume that $\left\{\Upsilon_\xi\right\}_{\xi \in \Omega}, \left\{\Phi_\xi\right\}_{\xi \in \Omega}$ be a $c-K_1-g$-frame and $c-g$ Bessel sequence for $\mathcal{H}$ with bounds $\alpha_1, \beta_1$ and $\beta_2$, respectively. For every $g \in \mathcal{K}$,
$$
\begin{aligned}
\int_{\Omega}\left\langle \left(\Upsilon_\xi \Theta_1^*+\Phi_\xi \Theta_2^*\right) g,\left(\Upsilon_\xi \Theta_1^*+\Phi_\xi \Theta_2^*\right) g \right\rangle \mathrm{d} \nu(\xi) & =\int_{\Omega}\left\langle \Upsilon_\xi \Theta_1^* g,\Upsilon_\xi \Theta_1^* g \right\rangle \mathrm{d} \nu(\xi)+\left\langle \Theta_2 \mathcal{T}_{\Phi} \mathcal{T}_{\Upsilon}^* \Theta_1^* g, g\right\rangle \\
& +\left\langle \Theta_1 \mathcal{T}_{\Upsilon} \mathcal{T}_{\Phi}^* \Theta_2^* g, g\right\rangle+\left\langle \Theta_2 \mathcal{T}_{\Phi} \mathcal{T}_{\Phi}^* \Theta_2^* g, g\right\rangle \\
& =\int_{\Omega}\left\langle \Upsilon_\xi \Theta_1^* g,\Upsilon_\xi \Theta_1^* g \right\rangle \mathrm{d} \nu(\xi)+\left\langle\left( \Theta_1 \mathcal{T}_{\Upsilon} \mathcal{T}_{\Phi}^* \Theta_2^*\right.\right. \\
& \left.\left.+\Theta_2 \mathcal{T}_{\Phi} \mathcal{T}_{\Upsilon}^* \Theta_1^*+\Theta_2 S_{\Phi} \Theta_2^*\right) g, g\right\rangle
\end{aligned}
$$
According to the hypotheses, for every  $g \in \mathcal{H}$ we get
$$
\begin{aligned}
\int_{\Omega}\left\langle \left(\Upsilon_\xi \Theta_1^*+\Phi_\xi \Theta_2^*\right) g,\left(\Upsilon_\xi \Theta_1^*+\Phi_\xi \Theta_2^*\right) g \right\rangle \mathrm{d} \nu(\xi) & \geq \int_{\Omega}\left\langle \Upsilon_\xi \Theta_1^* g,\Upsilon_\xi \Theta_1^* g \right\rangle \mathrm{d} \nu(\xi)\\  &\geq \alpha_1\left\langle K_1^* \Theta_1^* g,K_1^* \Theta_1^* g\right\rangle \\
& =\alpha_1\left\langle \Theta_1^* K_2^* g,\Theta_1^* K_2^* g\right\rangle \\  & \geq \alpha_1\left\|\Theta_1^{\dagger}\right\|^{-2}\left\langle K_2^* g,K_2^* g\right\rangle
\end{aligned}
$$Hence , for every $g \in \mathcal{K}$,
$$
\begin{aligned}
\alpha_1\left\|\Theta_1^{\dagger}\right\|^{-2}\left\langle K_2^* g,K_2^* g\right\rangle & \leq \int_{\Omega}\left\langle \left(\Upsilon_\xi \Theta_1^*+\Phi_\xi \Theta_2^*\right) g,\left(\Upsilon_\xi \Theta_1^*+\Phi_\xi \Theta_2^*\right) g \right\rangle \mathrm{d} \nu(\xi) \\
& \leq\left( \beta_1\left\|\Theta_1^{*}\right\|^2+ \beta_2\left\|\Theta_2^{*}\right\|^2\right)\left\langle g, g\right\rangle .
\end{aligned}
$$
\end{proof}

\begin{theorem}
Let $K_1 \in \operatorname{End}_\mathcal{A}^*(\mathcal{H})$ be closed range, $\left\{\Upsilon_\xi\right\}_{\xi \in \Omega}$ and $\left\{\Phi_\xi\right\}_{\xi \in \Omega}$ be c-K $K_1-g$-frames for $\mathcal{H}$ with respect to $\left\{\mathcal{\mathcal{H_\xi}}\right\}_{\xi \in \Omega}$. Assume that $K_2 \in \operatorname{End}_\mathcal{A}^*(\mathcal{K}), \Theta_1, \Theta_2 \in \operatorname{End}_\mathcal{A}^*(\mathcal{H},\mathcal{K})$ and $\Theta_1 \mathcal{T}_{\Upsilon} \mathcal{T}_{\Phi}^* \Theta_2^*+\Theta_2 \mathcal{T}_{\Phi} \mathcal{T}_{\Upsilon}^* \Theta_1^* \geq 0$.
\begin{enumerate}
\item[(i)]$P=\alpha_1 \Theta_1+\alpha_2 \Theta_2, \quad \quad \mathcal{R}\left(K_2\right) \subseteq \mathcal{R}(P),\mathcal{R}\left(P^*\right) \subseteq \mathcal{R}\left(K_1\right)$.
\item[(ii)]$\mathcal{Q}=\alpha_1 \Theta_1-\alpha_2 \Theta_2,\quad \quad \mathcal{R}\left(\mathcal{Q}^*\right) \subseteq \mathcal{R}\left(K_1\right), \mathcal{R}\left(K_2\right) \subseteq \mathcal{R}(\mathcal{Q})$ with $\overline{\mathcal{R}(\mathcal{Q}^{*})}$ is orthogonally complemented.
\end{enumerate}
Let $\alpha_1, \alpha_2>0,$ if one of (i), (ii) are holds then, $\left\{\alpha_1 \Upsilon_\xi \Theta_1^*+\right.$ $\left.\alpha_2 \Phi_\xi \Theta_2^*\right\}_{\xi \in \Omega}$ is a c- $K_2-g$-frame for $\mathcal{K}$ with respect to $\left\{\mathcal{\mathcal{H_\xi}}\right\}_{\xi \in \Omega}$.
\end{theorem}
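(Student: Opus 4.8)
The plan is to write $\Psi_\xi := \alpha_1\Upsilon_\xi\Theta_1^* + \alpha_2\Phi_\xi\Theta_2^*$ and verify the two frame inequalities for $\{\Psi_\xi\}_{\xi\in\Omega}$ directly, handling the upper (Bessel) bound and the lower bound separately. The organizing observation is that, by Theorem 1.9, the analysis operator satisfies $\mathcal{T}_\Psi^*g = \alpha_1\mathcal{T}_\Upsilon^*\Theta_1^* g + \alpha_2\mathcal{T}_\Phi^*\Theta_2^* g$, so that for every $g\in\mathcal{K}$ the quantity $\int_\Omega\langle\Psi_\xi g,\Psi_\xi g\rangle\,d\nu(\xi)=\langle\mathcal{T}_\Psi^*g,\mathcal{T}_\Psi^*g\rangle$ expands into two ``diagonal'' pieces $\alpha_i^2\int_\Omega\langle\,\cdot\,,\,\cdot\,\rangle$ plus a cross term equal to $\alpha_1\alpha_2\langle(\Theta_1\mathcal{T}_\Upsilon\mathcal{T}_\Phi^*\Theta_2^*+\Theta_2\mathcal{T}_\Phi\mathcal{T}_\Upsilon^*\Theta_1^*)g,g\rangle$, which is exactly a positive multiple of the operator assumed nonnegative. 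This is what couples the two summands, and it is what the hypothesis $\Theta_1\mathcal{T}_\Upsilon\mathcal{T}_\Phi^*\Theta_2^*+\Theta_2\mathcal{T}_\Phi\mathcal{T}_\Upsilon^*\Theta_1^*\geq 0$ is designed to control.

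For the upper bound the positivity is not needed at all: from $\langle a+b,a+b\rangle\leq 2\langle a,a\rangle+2\langle b,b\rangle$, the Bessel bounds $B_1,B_2$ of $\{\Upsilon_\xi\}$ and $\{\Phi_\xi\}$, and Lemma 1.4 applied to $\Theta_1,\Theta_2$ (giving $\langle\Theta_i^* g,\Theta_i^* g\rangle\leq\|\Theta_i\|^2\langle g,g\rangle$), one obtains $\int_\Omega\langle\Psi_\xi g,\Psi_\xi g\rangle\,d\nu(\xi)\leq(2\alpha_1^2B_1\|\Theta_1\|^2+2\alpha_2^2B_2\|\Theta_2\|^2)\langle g,g\rangle$, which is routine.

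For the lower bound I would first use the positivity to discard the cross term, leaving $\int_\Omega\langle\Psi_\xi g,\Psi_\xi g\rangle\,d\nu(\xi)\geq\alpha_1^2\int_\Omega\langle\Upsilon_\xi\Theta_1^* g,\Upsilon_\xi\Theta_1^* g\rangle\,d\nu(\xi)+\alpha_2^2\int_\Omega\langle\Phi_\xi\Theta_2^* g,\Phi_\xi\Theta_2^* g\rangle\,d\nu(\xi)$. Applying the lower $c$-$K_1$-$g$-frame bounds of $\{\Upsilon_\xi\}$ and $\{\Phi_\xi\}$ and writing $A$ for their minimum, this is $\geq A\bigl(\langle\alpha_1 K_1^*\Theta_1^* g,\alpha_1 K_1^*\Theta_1^* g\rangle+\langle\alpha_2 K_1^*\Theta_2^* g,\alpha_2 K_1^*\Theta_2^* g\rangle\bigr)$. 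Here I would invoke the elementary module inequality $\langle a,a\rangle+\langle b,b\rangle\geq\tfrac12\langle a\pm b,a\pm b\rangle$ with $a=\alpha_1 K_1^*\Theta_1^* g$ and $b=\alpha_2 K_1^*\Theta_2^* g$: the $+$ sign reassembles $K_1^* P^* g$ in case (i), and the $-$ sign reassembles $K_1^*\mathcal{Q}^* g$ in case (ii), yielding $\int_\Omega\langle\Psi_\xi g,\Psi_\xi g\rangle\,d\nu(\xi)\geq\tfrac{A}{2}\langle K_1^* P^* g,K_1^* P^* g\rangle$ (respectively with $\mathcal{Q}$).

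It remains, and this is the main obstacle, to pass from $P K_1 K_1^* P^*$ (respectively $\mathcal{Q}K_1K_1^*\mathcal{Q}^*$) to $K_2K_2^*$, i.e. to produce $c>0$ with $PK_1K_1^*P^*\geq c\,K_2K_2^*$. The range hypotheses are tailored for exactly this: from $\mathcal{R}(P^*)\subseteq\mathcal{R}(K_1)$, together with the closed range of $K_1$ (which makes $\mathcal{R}(K_1^*)$ closed and orthogonally complemented, so that Lemma 1.8 applies), I would factor $P=R^*K_1^*$; then $PK_1=R^*K_1^*K_1$, and since $\mathcal{R}(K_1^*K_1)=\mathcal{R}(K_1^*)$ for a closed-range $K_1$ one gets $\mathcal{R}(PK_1)=\mathcal{R}(P)\supseteq\mathcal{R}(K_2)$. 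Feeding $\mathcal{R}(K_2)\subseteq\mathcal{R}(PK_1)$ back into Lemma 1.8 gives $K_2K_2^*\leq\lambda\,(PK_1)(PK_1)^*=\lambda\,PK_1K_1^*P^*$, and combining with the previous step completes the lower bound with $c$-$K_2$-$g$-frame constant $A/(2\lambda)$. Case (ii) is identical with $\mathcal{Q}$ in place of $P$, the assumption that $\overline{\mathcal{R}(\mathcal{Q}^*)}$ is orthogonally complemented being precisely what legitimises the two applications of Lemma 1.8. The delicate point throughout is guaranteeing that the ranges involved are orthogonally complemented so that Lemma 1.8 is in force; this is where I expect the genuine work to lie, with case (i) quietly relying on closed-rangeness of $K_1$ and case (ii) assuming complementation outright.
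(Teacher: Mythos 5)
Your skeleton is the same as the paper's: expand $\int_\Omega\langle\Psi_\xi g,\Psi_\xi g\rangle \,\mathrm{d}\nu(\xi)$ via the synthesis operators into two diagonal terms plus the cross term $\alpha_1\alpha_2\langle(\Theta_1\mathcal{T}_\Upsilon\mathcal{T}_\Phi^*\Theta_2^*+\Theta_2\mathcal{T}_\Phi\mathcal{T}_\Upsilon^*\Theta_1^*)g,g\rangle$ (your coefficient is in fact the correct one; the paper writes $2\alpha_1\alpha_2$, immaterial for the sign argument), discard the cross term by the positivity hypothesis, apply the lower $c$-$K_1$-$g$ bounds, and use the parallelogram law to reassemble $\frac{\lambda}{2}\langle K_1^*\mathcal{Q}^*g,K_1^*\mathcal{Q}^*g\rangle$ (resp.\ with $P^*$). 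The Bessel bound is routine in both treatments (your factor $2$ is the careful version of the paper's stated constant). The divergence is in the final passage from $K_1^*P^*$ (or $K_1^*\mathcal{Q}^*$) to $K_2^*$, and there your argument has a genuine gap.

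To obtain $K_2K_2^*\leq\lambda\,(PK_1)(PK_1)^*$ from $\mathcal{R}(K_2)\subseteq\mathcal{R}(PK_1)$ you invoke Lemma 1.8 in the direction $(4)\Rightarrow(1)$ with $\mathcal{T}=PK_1$ (resp.\ $\mathcal{T}=\mathcal{Q}K_1$); but that lemma is only in force when $\overline{\mathcal{R}(\mathcal{T}^*)}=\overline{\mathcal{R}(K_1^*P^*)}$ is orthogonally complemented, and nothing in the hypotheses gives this: case (ii) assumes complementation of $\overline{\mathcal{R}(\mathcal{Q}^*)}$, not of $\overline{\mathcal{R}(K_1^*\mathcal{Q}^*)}$, and in Hilbert $C^*$-modules the closure of a submodule sitting inside a complemented closed submodule need not itself be complemented --- this failure is exactly why Lemma 1.8 carries that hypothesis. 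So your remark that the case (ii) assumption ``legitimises the two applications of Lemma 1.8'' is accurate only for the first application (with $\mathcal{T}=K_1$, covered by closed-rangeness), not the second. The paper avoids the composite operator altogether: since $\mathcal{Q}^*g\in\mathcal{R}(\mathcal{Q}^*)\subseteq\mathcal{R}(K_1)$ and $K_1$ has closed range, one has $\mathcal{Q}^*g=K_1K_1^{\dagger}\mathcal{Q}^*g=(K_1^{\dagger})^*K_1^*\mathcal{Q}^*g$ by Lemma 1.7, so Lemma 1.4 yields
$$\left\langle K_1^*\mathcal{Q}^*g, K_1^*\mathcal{Q}^*g\right\rangle\geq \left\|K_1^{\dagger}\right\|^{-2}\left\langle\mathcal{Q}^*g,\mathcal{Q}^*g\right\rangle,$$
and then Lemma 1.8 is applied once, directly to the pair $(K_2,\mathcal{Q})$ --- for which complementation of $\overline{\mathcal{R}(\mathcal{Q}^*)}$ is precisely the standing assumption --- giving $K_2K_2^*\leq\alpha\,\mathcal{Q}\mathcal{Q}^*$ and the lower frame bound $\frac{\lambda}{2\alpha}\|K_1^{\dagger}\|^{-2}\langle K_2^*g,K_2^*g\rangle$. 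Replacing your factorization $P=R^*K_1^*$ and the range computation $\mathcal{R}(PK_1)=\mathcal{R}(P)$ (which are correct but unnecessary) by this Moore--Penrose estimate repairs case (ii). Be aware, finally, that the paper's own proof treats only case (ii); your case (i), even after the same repair, still needs Lemma 1.8 for the pair $(K_2,P)$ and hence complementation of $\overline{\mathcal{R}(P^*)}$, which case (i) of the statement does not supply --- that omission lies in the theorem as stated, but your suggestion that closed-rangeness of $K_1$ quietly covers it is not correct, and it should instead be flagged as a missing hypothesis.
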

\begin{proof}
Let $A_1, B_1$ and $A_2, B_2$ be frame bounds of $\left\{\Upsilon_\xi\right\}_{\xi \in \Omega}$ and $\left\{\Phi_\xi\right\}_{\xi \in \Omega}$, respectively. It is easy to show that, for every $\alpha_1, \alpha_2>0$ and $g \in \mathcal{H}$, 
$$\int_{\Omega}\left\langle\left(\alpha_1 \Upsilon_\xi \Theta_1^*+\alpha_2 \Phi_\xi \Theta_2^*\right) g,\left(\alpha_1 \Upsilon_\xi \Theta_1^*+\alpha_2 \Phi_\xi \Theta_2^*\right) g\right\rangle \mathrm{d} \nu(\xi)  \leq\left( \alpha_1^2 B_1\left\|\Theta_1^{*}\right\|^2+ \alpha_2^2 B_2\left\|\Theta_2^{*}\right\|^2\right)\left\langle  g, g\right\rangle .$$
On the other hand,
$$
\begin{aligned}
\int_{\Omega}\left\langle\left(\alpha_1 \Upsilon_\xi \Theta_1^*+\alpha_2 \Phi_\xi \Theta_2^*\right) g,\left(\alpha_1 \Upsilon_\xi \Theta_1^*+\alpha_2 \Phi_\xi \Theta_2^*\right) g\right\rangle \mathrm{d} \nu(\xi) & =\alpha_1^2 \int_{\Omega}\left\langle \Upsilon_\xi \Theta_1^* g,\Upsilon_\xi \Theta_1^* g \right\rangle \mathrm{d} \nu(\xi) \\
& +2 \alpha_1 \alpha_2\left\langle\left(\Theta_2 \mathcal{T}_{\Phi} \mathcal{T}_{\Upsilon}^* \Theta_1^*+\Theta_1 \mathcal{T}_{\Upsilon} \mathcal{T}_{\Phi}^* \Theta_2^*\right) g\right., g\rangle \\
  &+\alpha_2^2 \int_{\Omega}\left\langle \Phi_\xi \Theta_2^* g,\Phi_\xi \Theta_2^* g\right\rangle \mathrm{d} \nu(\xi) \\
& \geq \alpha_1^2 A_1\left\langle K_1^* \Theta_1^* g,\right\rangle+\alpha_2^2 A_2\left\langle K_1^* \Theta_2^* g,K_1^* \Theta_2^* g\right\rangle 
\end{aligned}
$$
Assume condition (ii) is true. we pose
$$
\lambda=\min \left\{A_1, A_2\right\},
$$
According to the parallelogram law, for every  $g \in \mathcal{H}_2$,
$$
\begin{aligned}
\alpha_1^2 A_1\left\langle K_1^* \Theta_1^* g,K_1^* \Theta_1^* g\right\rangle+\alpha_2^2 A_2\left\langle K_1^* \Theta_2^* g,K_1^* \Theta_2^* g\right\rangle & \geq \lambda\left(\left\langle \alpha_1 K_1^* \Theta_1^* g,\alpha_1 K_1^* \Theta_1^* g\right\rangle+\left\langle \alpha_2 K_1^* \Theta_2^* g,\alpha_2 K_1^* \Theta_2^* g\right\rangle\right) \\
& =\frac{\lambda}{2}\left(\left\langle K_1^*\left(\alpha_1 \Theta_1+\alpha_2 \Theta_2\right)^* g,K_1^*\left(\alpha_1 \Theta_1+\alpha_2 \Theta_2\right)^* g\right\rangle\right. \\
& \left.+\left\langle K_1^*\left(\alpha_1 \Theta_1-\alpha_2 \Theta_2\right)^* g,K_1^*\left(\alpha_1 \Theta_1-\alpha_2 \Theta_2\right)^* g\right\rangle\right) \\
& \geq \frac{\lambda}{2}\left\langle K_1^* \mathcal{Q}^* g,K_1^* \mathcal{Q}^* g\right\rangle \\& \geq \frac{\lambda}{2}\left\|K_1^{\dagger}\right\|^{-2}\left\langle \mathcal{Q}^* g,\mathcal{Q}^* g\right\rangle .
\end{aligned}
$$
Since $ \mathcal{R}(\mathcal{Q})\supseteq \mathcal{R}\left(K_2\right)$, consequently applying Lemma \textbf{1.8}, there exists $\alpha>0$ such that $$K_2 K_2^* \leq \alpha \mathcal{Q} \mathcal{Q}^*.$$ Hence for $g \in \mathcal{K}$, $$\left\langle \mathcal{Q}^* g,\mathcal{Q}^* g\right\rangle \geq \alpha^{-1}\left\langle K_2^* g,K_2^* g\right\rangle .$$ consequently, for every $g \in \mathcal{H}_2$, 
$$
\begin{aligned}
\frac{\lambda}{2} \alpha^{-1}\left\|K_1^{\dagger}\right\|^{-2}\left\langle K_2^* g,K_2^* g\right\rangle & \leq \int_{\Omega}\left\langle\left(\alpha_1 \Upsilon_\xi \Theta_1^*+\alpha_2 \Phi_\xi \Theta_2^*\right) g,\left(\alpha_1 \Upsilon_\xi \Theta_1^*+\alpha_2 \Phi_\xi \Theta_2^*\right) g\right\rangle \mathrm{d} \nu(\xi) \\
& \leq\left( \alpha_1^2 B_1\left\|\Theta_1^{*}\right\|^2+ \alpha_2^2 B_2\left\|\Theta_2^{*}\right\|^2\right)\left\langle  g, g\right\rangle .
\end{aligned}
$$
\end{proof}

\bibliographystyle{amsplain}

\end{document}